\def\co{\colon\thinspace} 
\def\a{\alpha} 
\def\b{\beta} 
\def\d{\delta} 
\def\e{\epsilon} 
\def\g{\gamma} 
\def\l{\lambda} 
\def\i{{\bf i}} 
\def\s{\sigma} 
\def\t{\tau} 
\def\th{\theta} 
\def\cal{\mathcal} 
\def\CC{\mathcal C}
\def\M{{\cal M}} 
\def\S{{\cal S}} 
\def\DD{{\cal D}} 
\def\V{{\cal V}} 
\def\W{{\cal W}}
\def\P{{\cal P}} 
\def\PC{\mathcal {PC}} 
\def\RR{{\cal R}} 
\def\T{{\cal T}} 
\def\Th{{\rm Th}} 
\newcommand{\C}{\mathbb{C}} 
\newcommand{\A}{\mathbb{A}} 
\newcommand{\R}{\mathbb{R}} 
\newcommand{\NN}{\mathbb{N}} 
\newcommand{\Z}{\mathbb{Z}} 
\newcommand{\Q}{\mathbb{Q}}
\newcommand{\D}{\mathbb{D}} 
\newcommand{\HH}{\mathbb{H}} 
\def\tr{\mathop{\rm Tr}} 
\def\Tr{\mathop{\rm Tr}}
\def\PSL{\mathop{\rm PSL}} 
\def\CH{\mathop{\rm CH}} 
\def\Id{\mathop{\rm Id}} 
\renewcommand{\to}{\longrightarrow} 
\newcommand{\dd}{\partial}
\def \pl{\mathop{\rm{pl}}} 
\def\ML{\mathop{{ ML}} } 
\def\MLQ{\mathop{{ ML_{\Q}}} } 
\def\PML{\mathop{{PML}}}
\def\teich{{\cal T}} 
\def\Teich{Teichm\"uller } 
\def\ttau{{\underline{\tau}}} 
\def\tw{\rm{tw}}
\def\varnothing{\emptyset}
\newtheorem{Theorem}{Theorem}[section] 
\newtheorem{Lemma}[Theorem]{Lemma} 
\newtheorem{Proposition}[Theorem]{Proposition} 
\newtheorem{Corollary}[Theorem]{Corollary} 
\newtheorem{introthm}{Theorem}
 \theoremstyle{definition} 
\newtheorem{Definition}[Theorem]{Definition} 
\newtheorem{Notation}[Theorem]{Notation}
\theoremstyle{remark}
\newtheorem{Remark}[Theorem]{Remark}
\numberwithin{equation}{section}
\begin{document}

\title[The asymptotic directions]{The asymptotic directions of pleating rays in the Maskit embedding.}

\begin{abstract}
 
This article was born as a generalisation of the analysis made by Series in~\cite{Series1} where she made the first attempt to plot a deformation space of Kleinian group of more than 1 complex dimension. We use the Top Terms' Relationship proved by the author and Series in~\cite{Maloni-Series} to determine the asymptotic directions of pleating rays in the Maskit embedding of a hyperbolic surface $\Sigma$ as the bending measure of the `top' surface in the boundary of the convex core tends to zero. The Maskit embedding $\cal M$ of a surface $\Sigma$ is the space of geometrically finite groups on the boundary of quasifuchsian space for which the `top' end is homeomorphic to $\Sigma$, while the `bottom' end consists of triply punctured spheres, the remains of $\Sigma$ when the pants curves have been pinched. Given a projective measured lamination $[\eta]$ on $\Sigma$, the \emph{pleating ray} $\P = \P_{[\eta]}$ is the set of groups in $\M$ for which the bending measure $\pl^+(G)$ of the top component $\dd \CC^+$ of the boundary of the convex core of the associated $3$-manifold $\HH^{3}/G$ is in the class $[\eta]$.

\noindent {\bf MSC classification:} 30F40, 30F60, 57M50 
\end{abstract}

\author{Sara Maloni}
\address{Mathematics Institute, 
University of Warwick \\
Coventry CV4 7AL\\ UK}
\email{S.Maloni@warwick.ac.uk}

\date{\today}
\maketitle

\section{Introduction}\label{sec:introduction}

Let $\Sigma$ be a surface of negative Euler characteristic together with a pants decomposition $\P$. Kra's plumbing construction endows $\Sigma$ with a projective structure as follows. Replace each pair of pants by a triply punctured sphere and glue, or `plumb', adjacent pants by gluing punctured disk neighbourhoods of the punctures. The gluing across the $i^{th}$ pants curve is defined by a complex parameter $\tau_i \in \C$. The associated holonomy representation $\rho \co \pi_1(\Sigma) \to PSL(2,\C)$ gives a projective structure on $\Sigma$ which depends holomorphically on the $\tau_i$. In particular, the traces of all elements $\rho(\g) , \g \in \pi_1(\Sigma)$, are polynomials in the $\tau_i$.

In \cite{Maloni-Series} the author and Series proved a formula, called \textit{Top Terms' Relationship}, which is Theorem \ref{top} in Section \ref{ssub2.1}, giving a simple linear relationship between the coefficients of the top terms of $\rho(\g)$, as polynomials in the $\tau_i$, and the Dehn--Thurston coordinates of $\g$ relative to $\P$, see Section \ref{ssub2.1} for the definitions. This result generalises the previous results proved by Keen and Series in \cite{Keen-Series1} in the case of the once punctured torus $\Sigma_{1,1}$ and by Series in \cite{Series1} for the twice punctured torus $\Sigma_{1,2}$. These formulas were used in the case $\Sigma = \Sigma_{1,1}, \Sigma_{1,2}$ to determine the asymptotic directions of pleating rays in the Maskit embedding of $\Sigma$ as the bending measure of the `top' surface in the boundary of the convex core tends to zero, see Section \ref{sec:background} for the definitions. In the present article we will use the general Top Terms' Relationship to generalise the description of asymptotic directions of pleating rays to the case of an arbitrary hyperbolic surface $\Sigma$, see Theorem \ref{TA} in Section \ref{sec3}. The Maskit embedding $\cal M$ of a surface $\Sigma$ is the space of geometrically finite groups on the boundary of quasifuchsian space for which the `top' end is homeomorphic to $\Sigma$, while the `bottom' end consists of triply punctured spheres, the remains of $\Sigma$ when the pants curves have been pinched. As such representations vary in the character variety, the conformal structure on the top side varies over the Teichm\"uller space $\T(\Sigma)$, see Section \ref{sub:maskit} for a detailed discussion.

Let $\Sigma=\Sigma_{g,b}$, and suppose we have a geometrically finite free and discrete representation $\rho$ for which $M_{\rho} = \Sigma \times \R$. Denote $\xi =\xi(\Sigma) = 3g-3+b$ the complexity of the surface $\Sigma$. Fix disjoint, non-trivial, non-peripheral and non-homotopic simple closed curves $\s_1,\ldots, \s_{\xi}$ which form a maximal pants decomposition of $\Sigma$. We consider groups for which the conformal end $\omega^-$ is a union of triply punctured spheres glued across punctures corresponding to $\s_1,\ldots, \s_{\xi}$, while $\omega^+$ is a marked Riemann surface homeomorphic to $\Sigma$. Kra's plumbing construction gives us an explicit parametrisation of a holomorphic family of representation $\rho_{\ttau}\co \pi_{1}(\Sigma) \to G(\ttau) \in \PSL(2,\C)$ such that, for certain values $\ttau = (\tau_1,\ldots, \tau_{\xi}) \in \C^{\xi}$ of the parameters, $\rho_{\ttau}$ has the above geometry, see Section \ref{sub2.2} for the definition of this construction. The Maskit embedding is the map which sends a point $X \in \teich(\Sigma)$ to the point $\ttau = (\tau_1,\ldots, \tau_{\xi}) \in \C^{\xi}$ for which the group $G(\ttau)$ has $\omega^+ = X$. Denote the image of this map by $\M = \M(\Sigma)$. Note that, with abuse of notation, we will also call \textit{Maskit embedding} the image $\M$ of the map $\teich(\Sigma) \to \C^{\xi}$ just described.

We investigate $\cal M$ using the method of pleating rays. Given a projective measured lamination $[\eta]$ on $\Sigma$, the \emph{pleating ray} $\P = \P_{[\eta]}$ is the set of groups in $\M$ for which the bending measure $\pl^+(G)$ of the top component $\dd \CC^+$ of the boundary of the convex core of the associated $3$-manifold $\HH^{3}/G$ is in the class $[\eta]$. It is known that $\cal P$ is a real 1-submanifold of $\cal M$. In fact we can parametrise this ray by $\th \in (0, c_\eta)$, where $c_\eta \in (0, \pi)$, so that we associate to $\th$ the group $G_\th \in \cal P$ such that $\pl^+(G_\th) = \th\eta$, see Theorem 6 in \cite{Series3} for the case $\Sigma = \Sigma_{1,1}$. Note that this result relies on Thurston's bending conjecture which is solved for rational lamination by work of Otal and Bonahon and in the case of punctured tori by work of Series. For a  general (irrational) lamination, anyway, we can only conjecture that the real dimension of the associated pleating ray is 1. Our main result is a formula for the asymptotic direction of $\cal P$ in $\cal M$ as the bending measure tends to zero, in terms of natural parameters for the representation space $\cal R$ and the Dehn--Thurston coordinates of the support curves to $[\eta]$ relative to the pinched curves on the bottom side. This leads to a method of locating $\cal M$ in $\cal R$. 

We restrict to pleating rays for which $[\eta]$ is \emph{rational}, that is, supported on closed curves, and for simplicity write $\P_{\eta}$ in place of $\P_{[\eta]}$, although noting that $\P_{\eta}$ depends only on $[\eta]$. From general results of Bonahon and Otal~\cite{Bon-Otal}, for any pants decomposition $\g_1,\ldots,\g_{\xi}$ such that $\s_1,\ldots, \s_{\xi}, \g_1,\ldots, \g_{\xi}$ are mutually non-homotopic and fill up $\Sigma$ (see section \ref{ssub:manifolds} for the definitions), and any pair of angles $\theta_i \in (0,\pi)$, there is a unique group in $\M$ for which the bending measure of $\dd \CC^+$ is $\sum_{i = 1}^{\xi} \theta_i \delta_{\g_i}$. (This extends to the case $\th_i = 0$ for $i \in I \subset \{1,\ldots,\xi\}$ provided $\{\s_1,\ldots, \s_{\xi}, \g_j | j \notin I\}$ fill up $\Sigma$ and also to the case $\th = \pi$.) Thus given $\eta =\sum_{i = 1}^{\xi} a_i \delta_{\g_i}$, there is a unique group $G = G_{\eta}(\th) \in \M$ with bending measure $\pl^+(G)=\theta \eta$ for any sufficiently small $\theta >0$.

Let $\S$ denote the set of homotopy classes of multiple loops on $\Sigma$, and let the pants curves defining $\P$ be $\s_i, i=1,\ldots,\xi$. The \emph{Dehn--Thurston coordinates} of $\g \in \S$ are $\i(\g) = (q_1,p_1, \ldots,q_{\xi},p_{\xi})$, where $q_{i} = i(\g,\s_{i}) \in \NN \cup \{0\}$ is the geometric intersection number between $\g$ and $\s_{i}$ and $p_{i} \in \Z$ is the twist of $\g$ about $\s_i$. For a detailed discussion about this parametrisation see Section \ref{ssub2.1} below or Section 3 of \cite{Maloni-Series}. If $\eta = \sum_{i = 1}^{\xi} a_i \delta_{\g_i}$, the above condition of Bonahon and Otal on $\s_1,\ldots, \s_{\xi}, \g_1,\ldots, \g_{\xi}$ is equivalent to ask $q_i(\eta) >0, \forall i=1,\ldots, \xi$. We call such laminations \emph{admissible}. 

The main result of this paper is the following. We will state this result more precisely, as Theorem \ref{TA} in Section 3.

\begin{introthm}\label{thmA} 
    Suppose that $\eta = \sum_{i = 1}^{\xi} a_i \delta_{\g_i}$ is admissible. Then, as the bending measure $pl^+(G) \in [\eta]$ tends to zero, the pleating ray $\cal P_{\eta}$ approaches the line $$ \Re \tau_i = \frac{p_i(\eta)}{q_i(\eta)},\;\;\;\;\; \frac{\Im \tau_1}{\Im \tau_j} = \frac{q_j(\eta)}{q_1(\eta)}.$$ 
\end{introthm}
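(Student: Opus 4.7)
The proof proceeds in three steps: (i) characterise $\mathcal{P}_\eta$ as a real-algebraic locus in $\mathcal{M}$; (ii) substitute the explicit trace polynomials given by Theorem \ref{top}; and (iii) extract the leading behavior as $\theta\to 0$. For (i), following the strategy of Keen--Series \cite{Keen-Series1}, $\mathcal{P}_\eta$ is cut out by two families of conditions: for each $\gamma_i$ in the support of $\eta$, the axis of $\rho_{\ttau}(\gamma_i)$ lying on $\dd\mathcal{C}^+$ forces the complex length to be real, so $\text{tr}\,\rho_{\ttau}(\gamma_i)$ is real or purely imaginary according to the parity of $q(\gamma_i)$ ($\xi$ real equations), and the bending angles $\theta a_i$ at the $\gamma_i$ must stand in the prescribed ratio $a_1:\cdots:a_\xi$ ($\xi-1$ further real equations); together the $2\xi-1$ equations cut the $2\xi$-real-dimensional space $\mathcal{M}$ down to the $1$-real-dimensional ray.

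For (ii)--(iii), by Theorem \ref{top} each $\text{tr}\,\rho_{\ttau}(\gamma_i)$ is, up to an explicit $i^{q(\gamma_i)}$ factor, a polynomial in $\ttau$ whose leading monomial is $2^{1-q(\gamma_i)}\prod_j \tau_j^{q_j(\gamma_i)}$ and whose next-order coefficients are linear in the twists $p_j(\gamma_i)$. Writing $\tau_j=x_j+iy_j$ with $y_j\to+\infty$ and expanding to first non-trivial order in $1/y_j$, the reality equations of (i) become a linear system on the $x_j$ whose coefficient matrix is essentially the intersection matrix $(q_j(\gamma_i))_{i,j}$; invertibility, guaranteed by admissibility of $\eta$ and the filling property of $\{\sigma_l,\gamma_j\}$, then forces $\Re\tau_j\to p_j(\eta)/q_j(\eta)$. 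The bending-ratio conditions are next translated, via the length asymptotic $\ell(\gamma_i) = 2\log|\text{tr}\,\rho_{\ttau}(\gamma_i)|+O(1)\sim 2\sum_j q_j(\gamma_i)\log y_j$ coming from $|\text{tr}\,\rho_{\ttau}(\gamma_i)|=2\cosh(\ell(\gamma_i)/2)$, into the imaginary-part ratios $\Im\tau_1/\Im\tau_j\to q_j(\eta)/q_1(\eta)$.

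The principal obstacle is a properness statement: one must show that the ray $\mathcal{P}_\eta$ genuinely leaves every compact subset of $\mathcal{M}$ as $\theta\to 0$ and, more precisely, exits to the double-cusp boundary point at which the maximal curve system $\{\sigma_l,\gamma_i\}$ is simultaneously pinched, so that $y_j\to+\infty$. For $\Sigma_{1,1}$ and $\Sigma_{1,2}$ Series handled this via direct trace estimates, but for general $\Sigma$ a more conceptual argument is required, combining the Bonahon--Otal \cite{Bon-Otal} uniqueness for the bending-measure map with uniform control of the sub-leading corrections in Theorem \ref{top}, so that the leading-order asymptotic calculation above is not swamped by the lower-order terms and the next-to-leading coefficients remain non-degenerate along the ray.
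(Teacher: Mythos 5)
Your outline misses the two ideas that actually carry the paper's proof, and at the points where it substitutes its own mechanism the mechanism fails. First, you impose reality only on $\tr\rho_{\ttau}(\gamma_i)$ for the curves in the support of $\eta$, plus bending-angle-ratio conditions. That system is underdetermined (the theorem allows $k\leqslant\xi$ support curves, and even for $k=\xi$ it does not isolate the real parts), and your claim that the resulting linear system on the $x_j$ has coefficient matrix $(q_j(\gamma_i))$ which is invertible ``by admissibility and filling'' is false: admissible fillings can perfectly well have intersection matrices of non-maximal rank. This is precisely the ``exceptional lamination'' phenomenon that forced Series to exclude such $\eta$, and the whole point of the paper's argument is to avoid that hypothesis. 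The paper instead exploits that $\dd\CC^+(G)\setminus\{\gamma_1,\ldots,\gamma_k\}$ is flat, so \emph{every} curve in the link of $\{\gamma_1,\ldots,\gamma_k\}$ in $\CC(\Sigma)$ is a zero-angle bending line with real trace; completing to a pants decomposition and adding dual curves (Lemma \ref{pants}) gives $2\xi-k$ independent conditions $E_\zeta=O(\theta)$, which are then solved by linear algebra organised through Thurston's symplectic form in Dehn--Thurston coordinates (Theorem \ref{disj}), and the proportionality of the resulting coefficients $\lambda_i$ to the $a_i$ is obtained from Corollary \ref{Corol6_1} using the complexified data $q_i(\gamma)+ip_i(\gamma)$ rather than the rank of $(q_j(\gamma_i))$. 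None of this appears in your plan, and without it the real-part statement is not reached.

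Second, your route to the imaginary parts cannot work as described: the length asymptotic $\ell(\gamma_i)\sim 2\sum_j q_j(\gamma_i)\log y_j$ is a leading-log estimate, and since all $\log y_j\to\infty$ at comparable rates these expressions only see ratios tending to $1$ (this is exactly how the paper proves Theorem \ref{thmC}); they carry no information about the finite ratios $\Im\tau_1/\Im\tau_j$. The statement actually needed is $\Im\tau_i=(4+O(\theta))/(\theta q_i(\eta))$, i.e.\ Theorem \ref{Prop6_1}, which comes from the geometric estimates of Series' Section 6 (comparison of bending angles with the geometry of the convex hull boundary near the pinched curves, with $\gamma_T$ replaced by the dual curves $D_i$), not from the trace polynomials. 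Your final paragraph correctly senses that this degeneration analysis is the hard input, but ``combining Bonahon--Otal uniqueness with uniform control of the sub-leading corrections in Theorem \ref{top}'' is not an argument; Bonahon--Otal gives existence and uniqueness of $G_\eta(\theta)$, not the rate at which $\ttau(\theta)$ exits $\M$. (A smaller slip: the leading coefficient in Theorem \ref{top} is $\pm i^{q}2^{h}$ with $h$ the number of scc-arcs, not $2^{1-q}$, and the reality conditions there include the zero-angle bending lines, which is what makes the count of equations come out right.)
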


We should note that, in contrast to Series' statement, we were able to dispense with the hypothesis `$\eta$ non exceptional' (see \cite{Series1} for the definition), because we were able to improve the original proof. In addition, the definition of the line is different because we have corrected a misprint in \cite{Series1}.

One might also ask for the limit of the hyperbolic structure on $\dd \CC^+(G)$ as the bending measure tends to zero. The following result is an immediate consequence of the first part of the proof of Theorem~\ref{thmA}.

\begin{introthm}\label{thmC} 
    Let $\eta = \sum_{1}^{\xi}a_i \delta_{\g_i}$ be as above. Then, as the bending measure $\pl^+(G) \in [\eta]$ tends to zero, the induced hyperbolic structure of $\dd \CC^+$ along $\cal P_{\xi}$ converges to the barycentre of the laminations $\s_1, \ldots, \s_{\xi}$ in the Thurston boundary of $\teich(\Sigma)$. 
\end{introthm}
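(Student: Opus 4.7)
The plan is to derive Theorem~\ref{thmC} as a corollary of the first part of the proof of Theorem~\ref{thmA}, which establishes the asymptotic behaviour of the plumbing parameters along $\P_{\eta}$. That proof yields $\Im \tau_{i}(\th) \to +\infty$ together with the asymptotic ratio $\Im \tau_{i}(\th) / \Im \tau_{1}(\th) \to q_{i}(\eta) / q_{1}(\eta)$ as the bending parameter $\th \to 0^{+}$. The additional input I would use is the classical fact that the imaginary part of the plumbing parameter $\tau_{i}$ controls the pinching of the pants curve $\s_{i}$: as $\Im \tau_{i} \to +\infty$, the hyperbolic length $\ell_{i}(\th)$ of the geodesic representative of $\s_{i}$ on the pleated boundary $\dd \CC^{+}(G_{\th})$ tends to $0$, with the comparison $\log(1/\ell_{i}(\th)) \asymp \Im \tau_{i}(\th)$.

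Granting this comparison, Step~1 shows that the quantities $\log(1/\ell_{i}(\th))$ are pairwise asymptotic, since $\log(\Im \tau_{i} / \Im \tau_{1})$ is a bounded correction, negligible compared with $\log \Im \tau_{1}$. Applying the collar lemma simultaneously to the pinched curves $\s_{1}, \ldots, \s_{\xi}$, for any simple closed curve $\g$ on $\Sigma$ one has the standard asymptotic
$$ \ell_{\g}\bigl( \dd \CC^{+}(G_{\th}) \bigr) = 2 \sum_{i=1}^{\xi} i(\g, \s_{i}) \log\bigl( 1/\ell_{i}(\th) \bigr) + O(1) $$
as $\th \to 0^{+}$. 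Dividing through by $\log(1/\ell_{1}(\th))$ then gives
$$ \frac{\ell_{\g}\bigl( \dd \CC^{+}(G_{\th}) \bigr)}{\log(1/\ell_{1}(\th))} \to 2 \sum_{i=1}^{\xi} i(\g, \s_{i}) = 2\, i\bigl( \g,\, \s_{1} + \cdots + \s_{\xi} \bigr). $$
By Thurston's description of the boundary of $\teich(\Sigma)$ via projectivised length functions, this is precisely the statement that the induced hyperbolic structure on $\dd \CC^{+}$ converges to the projective class of $\s_{1} + \cdots + \s_{\xi}$, which is the barycentre of $\s_{1}, \ldots, \s_{\xi}$.

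The main obstacle is in justifying the comparison $\log(1/\ell_{i}(\th)) \asymp \Im \tau_{i}(\th)$ for the hyperbolic length on the pleated boundary $\dd \CC^{+}$ rather than for the conformal structure $\omega^{+}$ at infinity, where the standard plumbing estimates apply directly. For this I would use that as $\th \to 0^{+}$ the bending measure vanishes, so the pleated surface is nearly totally geodesic and its induced hyperbolic metric is close to the Poincar\'e metric on $\omega^{+}$; consequently the two metrics share the same projective limit in the Thurston boundary. The remaining steps (asymptotic collar estimate, dominance of $\log \Im \tau_{1}$) are then essentially bookkeeping on quantities already extracted from the proof of Theorem~\ref{thmA}.
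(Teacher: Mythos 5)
Your overall strategy coincides with the paper's: show the pants curves $\s_i$ are pinched on $\dd \CC^+$, use a thin-part/collar estimate to write the length of any transverse curve as $2\sum_i i(\cdot,\s_i)\log(1/\ell_i)$ plus an error, check that the quantities $\log(1/\ell_i)$ are mutually asymptotic, and conclude convergence to the barycentre in $\PML(\Sigma)$. The paper does this directly with the lengths $l^+_{\s_i}$ on the pleated boundary, whose pairwise ratios tend to $q_j(\eta)/q_i(\eta)$ by the estimates behind Theorem \ref{Prop6_1}; you instead detour through $\Im\tau_i$ and the conformal structure at infinity. Two steps of your version, however, do not hold as written.

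First, the comparison $\log(1/\ell_i(\th))\asymp\Im\tau_i(\th)$ is off by an exponential: the plumbing/annulus estimate gives $\ell_i\asymp 1/\Im\tau_i$, that is $\log(1/\ell_i)=\log\Im\tau_i+O(1)$. Taken literally, your claim would not even yield that the $\log(1/\ell_i)$ are pairwise asymptotic, since a two-sided comparison only gives bounded ratios, and $\Im\tau_i/\Im\tau_1\to q_1(\eta)/q_i(\eta)$, not $1$ (you also state this ratio upside down). It is precisely the corrected relation that makes your ``bounded correction, negligible against $\log\Im\tau_1$'' sentence valid. Second, the asymptotic $\ell_\g=2\sum_i i(\g,\s_i)\log(1/\ell_i)+O(1)$ is not a consequence of the collar lemma alone: one must also bound the twisting about each $\s_i$, otherwise the twisting contribution can be of the same order as the logarithms and the leading coefficient is lost. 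The paper supplies this from Section 6.4 of \cite{Series1} (in effect the boundedness of $\Re\tau_i$ along the ray, cf. Theorem \ref{Prop6_1}) together with Proposition 4.2 of \cite{Diaz-Series}; your proposal never addresses it, although the needed input is available from the same asymptotics you quote. Finally, for transferring estimates from $\Omega^+/G$ to $\dd\CC^+$, ``small bending makes the two metrics close'' is not a uniform statement you can invoke while the surface degenerates; what you want is the universal bi-Lipschitz comparison between the hyperbolic metric on $\dd\CC^+$ and the Poincar\'e metric at infinity (Sullivan; see \cite{Epstein}), which suffices here because a multiplicative constant becomes an additive $O(1)$ after taking logarithms. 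With these repairs your argument closes and is essentially the paper's proof, with the pinching of the $\s_i$ read off from $\Im\tau_i$ rather than from the direct estimates for $l^+_{\s_i}$.
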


This should be compared with the result in~\cite{Series2}, that the analogous limit through groups whose bending laminations on the two sides of the convex hull boundary are in the classes of a fixed pair of laminations $[\xi^{\pm}]$, is a Fuchsian group on the line of minima of $[\xi^{\pm}]$. It can also be compared with Theorem 1.1 and 1.2 in \cite{Diaz-Series}.

Finally, we wanted to underline that the result achieved in Theorem \ref{disj} about the relationship between the Thurston's symplectic form and the Dehn--Thurston coordinated for the curves is very interesting in its own. It tells us that given two loops $\gamma, \gamma' \in \S$ which belongs to the same chart of the standard train track, see Section \ref{ssub:sympl} for the definition, then $\Omega_{\rm Th} (\gamma,\gamma') = \sum_{i=1}^{\xi} (q_i p'_i - q'_i p_i),$ where the vector ${\bf i}(\gamma)=(q_{1}, p_{1}, \ldots, q_{\xi}, p_{\xi}), {\bf i}(\gamma')=(q'_{1}, p'_{1}, \ldots, q'_{\xi}, p'_{\xi})$ are the Dehn--Thurston coordinates of the curves $\g, \g'$.

The plan of the paper is as follows. Section \ref{sec:background} provides an overview of all the background material needed for understanding and proving the main results which we will prove in Section \ref{sec3}. In particular, in Section \ref{sec:background} we will discuss issues related to curves on surfaces (for example we will recall the Dehn--Thurston coordinates of the space of measured laminations, Thurston's symplectic structure, and the curve and the marking complexes). In the same section we will also review Kra's plumbing construction which endows a surface with a projective structure whose holonomy map gives us a group in the Maskit embedding, and we will discuss the Top Terms' Relationship. Then we will recall the definition of the Maskit embedding and of the pleating rays. In Section \ref{sec3}, on the other hand, after fixing some notation, we will prove the three main results stated above. We will follows Series' method \cite{Series1}: we will state (without proof) the theorems which generalise straightforwardly to our case, but we will discuss the results which require further comments. In particular, many proofs become much more complicated when we increase the complex dimension of the parameter space from 1 or 2 to the general $\xi(\Sigma)$. It is worth noticing that, using a slightly different proof in Theorem \ref{thmA}, we were able to extend Series' result to the case of `non-exceptional' laminations, see Section \ref{sec3} for the definition. We also corrected some mistakes in the statement of Theorem \ref{thmA}.

\section{Background} \label{sec:background}

\subsection{Curves on surfaces}

\subsubsection{Dehn--Thurston coordinates} \label{ssub2.1}

In this section we review Dehn--Thurston coordinates, which extend to global coordinates for the space of measure laminations $\ML(\Sigma)$. These coordinates are effectively the same as the \textit{canonical coordinates} in~\cite{Series1}. We follow the description in~\cite{Maloni-Series}. First we need to fix some notation.

Suppose $\Sigma$ is a surface of finite type, let $\S_0 = \S_0(\Sigma)$ denote the set of free homotopy classes of connected closed simple non-boundary parallel curves on $\Sigma$, and let $\S = \S(\Sigma)$ be the set of multi-curves on $\Sigma$, that is, the set of finite unions of pairwise disjoint curves in $\S_0$. For simplicity we usually refer to elements of $\S$ as `curves' rather than `multi-curves', in other words, a curve is not required to be connected. The geometric intersection number $i(\a,\b)$ between $\a,\b \in \S$ is the least number of intersections between curves representing the two homotopy classes, that is $$i(\a, \b) = \min_{a \in \a , \; b \in \b} |a \cap b|.$$ 

Given a surface $\Sigma = \Sigma_{g}^{b}$ of finite type and negative Euler characteristic, choose a maximal set $\PC = \{ \s_{1}, \ldots, \s_{\xi}\}$ of homotopically distinct and non-boundary parallel loops in $\Sigma$ called \emph{pants curves}, where $\xi = \xi(\Sigma) = 3g-3+b$ is the complexity of the surface. These connected curves split the surface into $k=2g-2+b$ three-holed spheres $P_1,\ldots, P_k$, called \emph{pairs of pants}. (Note that the boundary of $P_i$ may include punctures of $\Sigma$.) We refer to both the set $\cal P = \{ P_1,\ldots, P_k\}$, and the set $\PC$, as a \emph{pants decomposition} of $\Sigma$.

Now suppose we are given a surface $\Sigma$ together with a pants decomposition $\PC$ as above. Given $\g \in \S$, define $q_{i} = q_{i}(\g) = i(\g,\s_{i}) \in \Z_{\geqslant 0}$ for all $i = 1, \ldots, \xi$. Notice that if $\s_{i_{1}}, \s_{i_{2}}, \s_{i_{3}}$ are pants curves which together bound a pair of pants whose interior is embedded in $\Sigma$, then the sum $q_{i_{1}} + q_{i_{2}} + q_{i_{3}}$ of the corresponding intersection numbers is even. The $q_{i}$ are usually called the \textit{length parameters} of $\g$.

To define the \textit{twist parameter} $\tw_i = \tw_i(\g) \in \Z$ of $\g$ about $\s_i$, we first have to fix a marking on $\Sigma$. (See D. Thurston's preprint~\cite{Dylan} for a detailed discussion about three different, but equivalent ways of fixing a marking on $\Sigma$.) A way of specifying the marking is by choosing a set of curves $D_i$, each one dual to a pants curve $\s_i$, see next paragraph for the definition. Then, after isotoping $\g$ into a well-defined standard position relative to $\P$ and to the marking, the twist $\tw_i$ is the signed number of times that $\g$ intersects a short arc transverse to $\s_i$. We make the convention that if $i(\g,\s_i) = 0$, then $\tw_i(\g) \geqslant 0$ is the number of components in $\g$ freely homotopic to $\s_i$.

Each pants curve $\s$ is the common boundary of one or two pairs of pants whose union we refer to as the \textit{modular surface} associated to $\s$, denoted $M(\s)$. Nota that if $\s$ is adjacent to exactly one pair of pants, $M(\s)$ is a one holed torus, while if $\s$ is adjacent to two distinct pairs of pants, $M(\s)$ is a four holed sphere. A curve $D$ is \textbf{dual} to the pants curve $\s$ if it intersect $\s$ minimally and is completely contained in the modular surface $M(\s)$. 

\begin{Remark}[Convention on dual curves]\label{dual}
    We shall need to consider \emph{dual curves} to $\s_i \in \PC$. The intersection number of such a connected curve with $\s_i$ is $1$  if $M(\s_i)$ a  one-holed torus and $2$ if it is a four-holed sphere. We adopt a useful convention introduced in~\cite{Dylan} which simplifies the formulae, in such a way as to avoid the need to distinguish between these two cases. Namely, for those $\s_i$  for which $M(\s_i)$ is $\Sigma_{1,1}$, we define the dual curve $D_i \in \S$  to be \emph{two} parallel copies of the connected curve intersecting $\s_i$ once, while if $M(\s_i)$ is $\Sigma_{0,4}$ we take a single copy. In this way we always have, by definition, $i(\s_i, D_i) =2$. See Section 2 of \cite{Maloni-Series} for a deeper discussion.
\end{Remark}

There are various ways of defining the \emph{standard position} of $\g$, leading to differing definitions of the twist. In this paper we will always use the one defined by D. Thurston~\cite{Dylan} (which we will denote $p_i(\g)$), but we refer to our previous article~\cite{Maloni-Series} for a further discussion about the different definitions of the twist parameter and for the precise relationship between them (Theorem 3.5 \cite{Maloni-Series}). With either definition, a classical theorem of Dehn~\cite{Dehn}, see also~\cite{Penner} (p 12), asserts that the length and twist parameters uniquely determine $\g \in \S$. This result was described by Dehn in a 1922 Breslau lecture~\cite{Dehn}. 

\begin{Theorem}
	\textbf{(Dehn's theorem, 1922)}\\
	Given a marking $(\PC;\DD) = (\s_{1}, \ldots, \s_{\xi};D_{1}, \ldots, D_{\xi})$ on $\Sigma$, the map ${\bf i} = {\bf i}_{(\PC;\D) } \co \S(\Sigma) \to \Z_{\geqslant 0}^{\xi} \times \Z^{\xi}$ which sends $\g \in \S(\Sigma)$ to \\
	$(q_{1}(\g), \ldots, q_{\xi}(\g);\tw_{1}(\g), \ldots, \tw_{\xi}(\g))$ is an injection. The point \\
	$(q_{1}, \ldots, q_{\xi}, \tw_{1}, \ldots, \tw_{\xi}) $ is in the image of ${\bf i}$ (and hence corresponds to a curve) if and only if: 
	\begin{enumerate}
		\renewcommand{\labelenumi}{(\roman{enumi})} 
		\item if $q_{i} = 0$, then $\tw_{i} \geqslant 0$, for each $i = 1, \ldots, \xi$. 
		\item if $\s_{i_{1}}, \s_{i_{2}}, \s_{i_{3}}$ are pants curves which together bound a pair of pants whose interior is embedded in $\Sigma$, then the sum $q_{i_{1}} + q_{i_{2}} + q_{i_{3}}$ of the corresponding intersection numbers is even. 
	\end{enumerate}
\end{Theorem}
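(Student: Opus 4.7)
The strategy is the standard one going back to Dehn, with the pants decomposition $\PC$ supplying a coordinate system in two layers: first an ``internal'' layer describing the intersection of $\g$ with each pair of pants $P_j$, then a ``gluing'' layer describing how the pieces in adjacent pairs of pants are identified across each $\s_i$. The plan is to first isotope $\g$ to a standard (minimal) position with respect to $(\PC;\DD)$, show that the length parameters $q_i$ determine the picture inside each $P_j$ uniquely, and then show that the twist parameters $\tw_i$ determine the gluings across each $\s_i$ uniquely. This will give both injectivity and the surjectivity onto the tuples characterised by (i) and (ii).

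For the internal layer, after isotoping $\g$ to meet each $\s_i$ minimally, the restriction of $\g$ to a pair of pants $P_j$ is a disjoint union of essential arcs with endpoints on $\dd P_j$, together with possibly some copies of boundary components. An essential arc is classified up to isotopy by the unordered pair of boundary components (possibly equal) to which its endpoints belong, giving at most six arc types. If the three boundary components of $P_j$ are $\s_{i_1},\s_{i_2},\s_{i_3}$ and they meet $\g$ in $q_{i_1},q_{i_2},q_{i_3}$ points respectively, counting endpoints on each boundary gives a $3\times 6$ linear system in the arc counts; a direct check shows this system has a unique solution in $\Z_{\geqslant 0}$ exactly when $q_{i_1}+q_{i_2}+q_{i_3}$ is even, i.e.\ precisely condition (ii). This both identifies the arcs inside each $P_j$ from the $q_i$'s and shows (ii) is sharp.

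For the gluing layer, once the arc picture inside each $P_j$ is fixed, reconstructing $\g$ amounts to choosing, on each pants curve $\s_i$, a bijection between the $q_i$ endpoints arriving from one side and the $q_i$ endpoints arriving from the other. Any two such bijections differ by a power of the Dehn twist around $\s_i$, so the reconstruction is parametrised by a single integer per $\s_i$. The dual curve $D_i$, normalised as in Remark~\ref{dual} so that $i(\s_i, D_i)=2$ in both the $\Sigma_{1,1}$ and $\Sigma_{0,4}$ cases, pins down a canonical ``zero-twist'' matching on $M(\s_i)$, and the number of additional Dehn twists needed to reach the actual matching is by definition $\tw_i(\g)$. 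Conversely, any integer $\tw_i$ can be realised by twisting, and when $q_i=0$ the only residual information is the number of components of $\g$ isotopic to $\s_i$, which is a non-negative integer; this is exactly condition (i). Putting the two layers together gives the injectivity of $\i$ and identifies its image with the set cut out by (i) and (ii).

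The main obstacle is not the combinatorics of arcs in a pair of pants, which is elementary, but rather ensuring that the twist parameter $\tw_i$ is a well-defined invariant of the isotopy class of $\g$ and is compatible with the $D_i$-convention of Remark~\ref{dual}. This requires checking that the notion of ``standard position'' of $\g$ relative to $(\PC;\DD)$ exists and is unique up to isotopy fixing $\PC$, and that the two-parallel-copies convention used when $M(\s_i)\cong\Sigma_{1,1}$ produces the same integer twist as the classical one-copy convention (up to the factor already built into $i(\s_i,D_i)=2$). This compatibility is exactly what is carried out in detail in Section~3 of~\cite{Maloni-Series} and in~\cite{Dylan}; once it is in place, the two-layer argument above yields the theorem. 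The realisation direction is then immediate: starting from any tuple satisfying (i) and (ii), construct the arcs in each $P_j$ from the $q_i$'s using (ii), then glue according to the $\tw_i$'s (adding parallel copies of $\s_i$ when $q_i=0$), producing a curve in $\S(\Sigma)$ whose coordinates are the given tuple.
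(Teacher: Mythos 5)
Your strategy is the same two-layer argument the paper itself sketches after the statement (arcs inside each pair of pants determined by the $q_i$, matchings across each $\s_i$ determined by the $\tw_i$, with the twist conventions delegated to \cite{Dylan} and \cite{Maloni-Series}); the paper gives no more detail than this, referring to Dehn and to \cite{Penner} for the proof. However, there is one concrete flaw in your internal layer: it is not true that the $3\times 6$ linear system in the six arc-type counts has a unique solution in $\Z_{\geqslant 0}$ whenever $q_{i_1}+q_{i_2}+q_{i_3}$ is even. For example, with $(q_{i_1},q_{i_2},q_{i_3})=(2,2,2)$ the system admits both the solution consisting of one arc of each of the three types joining distinct boundary components and the solution consisting of one scc-arc on each boundary component. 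Uniqueness of the picture of $\g\cap P_j$ does not come from linear algebra alone; it comes from the geometric constraint that the arcs must be simultaneously disjointly embedded in $P_j$. An essential scc-arc based at one boundary component separates the other two, so scc-arcs based at two different boundary components cannot be disjoint, and an scc-arc based at $\dd_{i_1}P_j$ excludes any arc joining $\dd_{i_2}P_j$ to $\dd_{i_3}P_j$. With these constraints the admissible solution is indeed unique (the familiar formulas: if the $q$'s satisfy the triangle inequalities, the count of arcs joining $\dd_{i_1}P_j$ to $\dd_{i_2}P_j$ is $\tfrac{1}{2}(q_{i_1}+q_{i_2}-q_{i_3})$ and there are no scc-arcs; otherwise the dominant boundary carries $\tfrac{1}{2}(q_{i_1}-q_{i_2}-q_{i_3})$ scc-arcs and the remaining arcs are forced), and the parity condition (ii) is exactly what makes these counts integers. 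With that correction, and with the well-definedness of the twist in standard position taken from \cite{Dylan} and \cite{Maloni-Series} as you indicate, your argument goes through and agrees with the paper's intended proof.
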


One can think of this theorem in the following way. Suppose given a curve $\g \in \S$, whose length parameters $q_{i}(\g)$ necessarily satisfy the parity condition (ii), then the $q_{i}(\g)$ uniquely determine $\g \cap P_{j}$ for each pair of pants $P_{j}$, $j= 1, \ldots, k$, in accordance with the possible arrangements of arcs in a pair of pants, see for example~\cite{Penner}. Now given two pants adjacent along the curve $\s_{i}$, we have $q_{i}(\g)$ points of intersection coming from each side and we have only to decide how to match them together to recover $\g$. The matching takes place in the cyclic cover of an annular neighbourhood of $\s_i$. The twist parameter $tw_i(\g)$ specifies which of the $\Z$ possible choices is used for the matching.

In 1976 William Thurston rediscovered Dehn's result and extended it to a parametrisation of (Whitehead equivalence classes of) measured foliation of $\Sigma$, see Fathi, Laudenbach and Po\'enaru \cite{FLP} or Penner with Harer \cite{Penner} for a detailed discussion. Penner's approach for parametrising $\ML(\Sigma)$ is through \textit{train tracks}. Using them, Thurston also defined a symplectic form on $\ML(\Sigma)$, called \textit{Thurston's symplectic form}. Since it will be useful later, we will recall its definition and some properties in the next section.

\subsubsection{Thurston's symplectic form} \label{ssub:sympl}

We will focus on Penner's approach, following Hamenstad's notation \cite{Ham}. We will define train tracks and some other related notions, so as to be able to define the symplectic form. Then we will present an easy way to calculate it.

A \textit{train track} on the surface $\Sigma$ is an embedded 1--complex $\tau \subset \Sigma$ whose edges (called \textit{branches}) are smooth arcs with well--defined tangent vectors at the endpoints. At any vertex (called a \textit{switch}) the incident edges are mutually tangent. Through each switch there is a path of class $C^1$ which is embedded in $\tau$ and contains the switch in its interior. In particular, the branches which are incident on a fixed switch are divided into ``incoming" and ``outgoing" branches according to their inward pointing tangent vectors at the switch. Each closed curve component of $\tau$ has a unique bivalent switch, and all other switches are at least trivalent. The complementary regions of the train track have negative Euler characteristic, which means that they are different from discs with 0, 1 or 2 cusps at the boundary and different from annuli and once-punctured discs with no cusps at the boundary. A train track is called \textit{generic} if all switches are at most trivalent. Note that in the case of a trivalent vertex there is one incoming branch and two outgoing ones. 

Denote $\mathcal{B} = \mathcal{B}(\tau)$ the set of branches of $\tau$. Then a function $w\co\mathcal{B} \to \R_{\geqslant 0}$ (resp. $w\co\mathcal{B} \to \R$) is a \textit{transverse measure} (resp. \textit{weighting}) for $\tau$ if it satisfies the \textit{switch condition}, that is for all switches $v$, we want $\sum_{i} w(e_i) = \sum_{j} w(E_j)$ where the $e_i$ are the incoming branches at $v$ and $E_j$ are the outgoing ones. 
 
A train track is called \textit{recurrent} if it admits a transverse measure which is positive on every branch. A train track $\tau$ is called \textit{transversely recurrent} if every branch $b \in \mathcal{B}(\tau)$ is intersected by an embedded simple closed curve $c = c(b) \subset \Sigma$ which intersects $\tau$ transversely and is such that $\Sigma - \tau -c$ does not contain an embedded bigon, i.e. a disc with two corners on the boundary. A recurrent and transversely recurrent train track is called \textit{birecurrent}. A geodesic lamination or a train track $\lambda$ is \textit{carried by} a train track $\tau$ if there is a map $F\co \Sigma \to \Sigma$ of class $C^1$ which is isotopic to the identity and which maps $\lambda$ to $\tau$ in such a way that the restriction of its differential $dF$ to every tangent line of $\lambda$ is non--singular. A generic transversely recurrent train track which carries a complete geodesic lamination is called \textit{complete}, where we define a geodesic lamination to be complete if there is no geodesic lamination that strictly contains it.

Given a generic birecurrent train track $\tau \subset \Sigma$, we define $\V(\tau)$ to be the collection of all (not necessary nonzero) transverse measures supported on $\tau$ and let $\W(\tau)$ be the vector space of all assignments of (not necessary nonnegative) real numbers, one to each branch of $\tau$, which satisfy the switch conditions. By splitting, we can arrange $\tau$ to be generic. Since $\Sigma$ is oriented, we can distinguish the right and left hand outgoing branches, see Figure \ref{fig1}. If ${\bf n}, {\bf n'} \in \W(\tau)$ are weightings on $\tau$ (representing points in $\ML(\Sigma)$), then we denote by ${b_v}({\bf n}), {c_v}({\bf n})$ the weights of the left hand and right hand outgoing branches at $v$ respectively. The \textit{Thurston product} is defined as $$\Omega_{\rm Th} ({\bf n}, {\bf n'}) = \frac{1}{2}\sum_{v}b_v({\bf n}) c_v({\bf n'}) -b_v({\bf n'}) c_v({\bf n}).$$ 

In Theorem 3.1.4 of Penner \cite{Penner} it is proved that, if the train track $\tau \subset \Sigma$ is complete, then the interior $\stackrel{\circ}{\V}(\tau)$ of $\V(\tau)$ for a complete train track $\tau \subset \Sigma$ can be thought of as a chart on the PIL manifold $\ML_{\Q}(\Sigma)$ of rational measured laminations, that is laminations supported on multi-curves. (PIL is short for \textit{piecewise--integral--linear}, see \cite[Section 3.1]{Penner} for the definition.) In addition, in this case, we can identify $\W(\tau)$ with the tangent space to $\ML_{\Q}(\Sigma)$ at a point in $\stackrel{\circ}{\V}(\tau)$. The Thuston product $\Omega_{\rm Th}$ defined above allows us to define a symplectic structure on the PIL manifold $\ML_{\Q}(\Sigma)$. 

\begin{figure}
	\centering 
	\includegraphics [height = .9 in, viewport = 180 600 500 675]{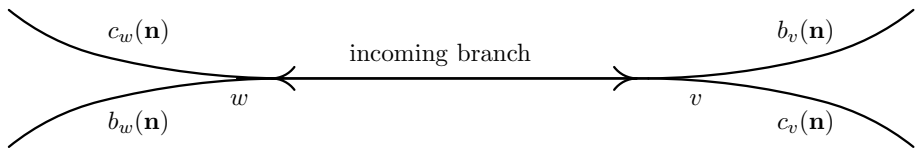} 
	\caption{Weighted branches at a switch.} 
	\label{fig1} 
\end{figure}

It is interesting to note that if $\tau$ is oriented, then there is a natural map $h_{\tau}\co\W(\tau) \to \mathrm{H}_{1}(\Sigma;\R)$, see Section 3.2 \cite{Penner}, which is related to the Thurston product by the following result. For a generalisation of this result to the case of an arbitrary (not necessarily orientable) track $\tau \in \Sigma$, see Section 3.2 \cite{Penner}.

\begin{Proposition}[Lemma 3.2.1 and 3.2.2 \cite{Penner}]
 For any train track $\tau$, $\Omega_{\rm Th} (\cdot, \cdot)$ is a skew-symmetric bilinear pairing on $\W(\tau)$. In addition, if $\tau$ is connected, oriented and recurrent, then for any ${\bf n}, {\bf n'} \in \W(\tau)$, $\Omega_{\rm Th} ({\bf n}, {\bf n'})$ is the homology intersection number of the classes $h_{\tau}({\bf n})$ and $h_{\tau}({\bf n'})$. 
\end{Proposition}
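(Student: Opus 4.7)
The first assertion is essentially immediate from the definition. For any branch $e$ of $\tau$, the evaluation map ${\bf n}\mapsto {\bf n}(e)$ is a linear functional on $\W(\tau)$; in particular each $b_v(\cdot)$ and $c_v(\cdot)$ is linear. Hence every summand $b_v({\bf n})c_v({\bf n'})-b_v({\bf n'})c_v({\bf n})$ in the definition of $\Omega_{\rm Th}$ is bilinear in the pair $({\bf n},{\bf n'})$, and manifestly changes sign under the interchange ${\bf n}\leftrightarrow {\bf n'}$. I would simply record this before moving on.

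To prove the intersection-pairing identity, the plan is to reduce to integer weightings carried by $\tau$ and then to compute geometrically. By bilinearity of both $\Omega_{\rm Th}$ and the homology intersection form on $H_1(\Sigma;\R)$, together with density of rational weightings in $\W(\tau)$, it suffices to verify the identity when ${\bf n}$ and ${\bf n'}$ are integer weightings realised by oriented multicurves $\g,\g'$ carried by $\tau$ (with orientations inherited from that of $\tau$, which is possible precisely because $\tau$ is oriented and recurrent). In this situation $h_\tau({\bf n})=[\g]$ and $h_\tau({\bf n'})=[\g']$ in $H_1(\Sigma;\R)$, so both sides of the claimed identity become geometric quantities attached to $\g$ and $\g'$.

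The core of the argument is then a local count at each switch. I would choose smooth representatives of $\g$ and $\g'$ which coincide on each branch of $\tau$, running as bundles of parallel strands whose number equals the weight, and which are generically perturbed inside a small neighbourhood of every switch. Away from switches the strands can be made disjoint, so every intersection between $\g$ and $\g'$ is localised at some switch. At a trivalent switch $v$ with incoming branch $a$ and outgoing branches $b$ (left) and $c$ (right), a direct inspection of the smoothing shows that the $b_v({\bf n})$ strands of $\g$ along $b$ cross the $c_v({\bf n'})$ strands of $\g'$ along $c$, and symmetrically, producing a signed contribution of $\tfrac{1}{2}\bigl(b_v({\bf n})c_v({\bf n'})-b_v({\bf n'})c_v({\bf n})\bigr)$; the factor $\tfrac{1}{2}$ accounts for the left/right symmetry of the enumeration of outgoing pairs at $v$. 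Summing over switches then recovers $\Omega_{\rm Th}({\bf n},{\bf n'})$ and identifies it with the signed count $[\g]\cdot[\g']$.

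The main obstacle is the bookkeeping of orientations and signs, particularly ensuring that the local sign extracted from the ordered pair of outgoing branches matches the sign of the oriented transverse intersection dictated by the orientation of $\Sigma$, and that the perturbation can be implemented so that no spurious intersections appear in the interior of branches. Non-generic switches are handled by splitting them into trivalent ones; neither the Thurston product nor the homological intersection is affected by such a splitting, which allows us to reduce to the generic case treated above.
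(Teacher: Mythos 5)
The paper does not prove this Proposition at all: it is quoted verbatim as Lemmas 3.2.1 and 3.2.2 of Penner--Harer, so the only fair comparison is with the standard argument you are reconstructing. Your outline is essentially that standard argument, and the first paragraph (bilinearity and antisymmetry of $\Omega_{\rm Th}$ on $\W(\tau)$) is complete as stated. However, two steps in the second half are asserted rather than proved. First, the reduction ``by bilinearity and density of rational weightings'' to weightings realised by oriented multicurves is not yet a reduction: elements of $\W(\tau)$ may have negative entries, and only \emph{nonnegative} integral weightings are realised by carried multicurves. What you need, and what recurrence actually buys you, is a strictly positive integral transverse measure $w_0$ on $\tau$, so that any integral $w\in\W(\tau)$ can be written as $w=(w+Nw_0)-Nw_0$ with both terms nonnegative and hence realised by oriented carried multicurves; bilinearity of both pairings then propagates the identity from these to all integral, then rational, then (by continuity) real weightings. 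As written, you invoke recurrence for the realisability of a nonnegative integral weighting, which does not need it, and omit it where it is genuinely used.

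Second, the per-switch count is not correct as stated: for a generic perturbation, whether a strand of $\g$ headed into the left branch actually crosses a strand of $\g'$ headed into the right branch depends on the interleaving of the strands along the incoming branch, so the signed contribution at a single switch is \emph{not} canonically $\tfrac{1}{2}\bigl(b_v({\bf n})c_v({\bf n'})-b_v({\bf n'})c_v({\bf n})\bigr)$; only the global signed sum is well defined, and ``the left/right symmetry of the enumeration'' does not by itself produce the factor $\tfrac12$. A clean way to finish is to compute the homological intersection by pushing the cycle representing $h_\tau({\bf n'})$ off to its left along every branch: then all intersections localise at switches, each switch contributes (with one fixed sign) exactly $b_v({\bf n})\,c_v({\bf n'})$, and antisymmetry of the homological pairing converts $\pm\sum_v b_v({\bf n})c_v({\bf n'})$ into the antisymmetrised expression $\tfrac12\sum_v\bigl(b_v({\bf n})c_v({\bf n'})-b_v({\bf n'})c_v({\bf n})\bigr)$, with the sign checked on one local model using the orientation of $\Sigma$. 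With those two repairs (and your remark on splitting non-generic switches, which is fine), the argument goes through and is the intended one.
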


In Proposition 4.3 of \cite{Series1}, Series relates Thurston product to the Dehn--Thurston coordinates described above, but her proof works only for the case $\Sigma = \Sigma_{1,2}$, since she uses a particular choice of train tracks, called \textit{canonical train tracks}. Our idea was to use the \textit{standard train tracks}, as defined by Penner \cite{Penner} in Section 2.6. The Dehn-Thurston coordinates, using Penner's twist $\hat{p}_i$, give us a choice of a standard model and of specific weights on each edge of the track. Then one can calculate the Thurston's product, using the definition above, for a pair of curves $\gamma, \gamma' \in \S$ supported on a common standard train track. Finally, using the relationship between Penner's and D. Thurston's twist, as described by Theorem 3.5 by Maloni and Series \cite{Maloni-Series}, one can prove the following result, which will be very important in the proof of our main theorems. In particular, the standard train track are of two types: the tracks in the annuli around the pants curves and the tracks in the pair of pants. The sum of the Thurston's product in the annuli give us $\sum_{i=1}^{\xi} (q_i \hat{p}'_i - q'_i \hat{p}_i)$, using Penner's twists, while the sum of the pairs of pants give us some terms, so that the total sum give us the results that we want, that is the product $\sum_{i=1}^{\xi} (q_i p'_i - q'_i p_i)$, where we use D. Thurston's twist. We should notice that this result, although we proved it because we need it in our last section, is really interesting in its own and it is possible much more can be said from it.

\begin{Theorem}\label{disj}
   Suppose that loops $\gamma, \gamma' \in \S$ belongs to the same chart (and so are supported on a common standard train track) and they are represented by coordinates ${\bf i}(\gamma)=(q_{1}, p_{1}, \ldots, q_{\xi}, p_{\xi}), {\bf i}(\gamma')=(q'_{1}, p'_{1}, \ldots, q'_{\xi}, p'_{\xi})$. Then $\Omega_{\rm Th} (\gamma,\gamma') = \sum_{i=1}^{\xi} (q_i p'_i - q'_i p_i).$ 
   
   In addition, if $\g,\g'$ are disjoint, then $\Omega_{\rm Th} (\gamma,\g') =0$. 
\end{Theorem}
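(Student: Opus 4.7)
The plan is to compute $\Omega_{\rm Th}(\g,\g')$ branch by branch on the standard train track carrying both $\g$ and $\g'$, then translate from Penner's twist convention to D.~Thurston's twist convention using Theorem 3.5 of \cite{Maloni-Series}. I would organise the work according to Penner's decomposition of a standard train track $\tau$ on $\Sigma$ into two families of subtracks: an annular piece $\tau_i^{\rm ann}$ in a neighbourhood of each pants curve $\s_i$, and a pants piece $\tau_j^{\rm pants}$ inside each pair of pants $P_j$. The definition of $\Omega_{\rm Th}$ is a sum over switches, so it splits as
\[
\Omega_{\rm Th}(\g,\g') \;=\; \sum_i \Omega_i^{\rm ann}(\g,\g') \;+\; \sum_j \Omega_j^{\rm pants}(\g,\g'),
\]
and I would analyse the two families separately.

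For the annular contribution I would use Penner's explicit model: the track $\tau_i^{\rm ann}$ consists of two parallel copies of $\s_i$ joined by short arcs whose weights encode the twist parameter in Penner's sense. Using the switch condition, the weights on the left/right outgoing branches at the two switches sitting on $\s_i$ are determined by the pair $(q_i(\g),\hat{p}_i(\g))$, and a short bookkeeping calculation (identical to the one carried out for a single annulus in \cite[Prop.~4.3]{Series1}) yields
\[
\Omega_i^{\rm ann}(\g,\g') \;=\; q_i \hat{p}'_i - q'_i \hat{p}_i.
\]
Summing gives the asserted formula with Penner's twists $\hat{p}_i,\hat{p}'_i$ in place of D.~Thurston's $p_i,p'_i$.

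For the pants contribution I would enumerate the finitely many combinatorial types of arc on each pair of pants $P_j$ (the three families of arcs joining a boundary to itself and the three families connecting distinct boundaries), put both $\g\cap P_j$ and $\g'\cap P_j$ on the standard model, and read off the weights at each switch. The key observation is that the pants tracks are planar and are supported in simply-connected regions, so after regrouping the sum over switches, the contribution $\Omega_j^{\rm pants}(\g,\g')$ should reduce to a signed count of intersections of $\g$ and $\g'$ inside $P_j$; by the Thurston product's topological interpretation and the absence of twisting inside a pair of pants, this count is expressible entirely in terms of the length parameters $q_i,q'_i$ of the three boundary curves of $P_j$. Invoking Theorem~3.5 of \cite{Maloni-Series}, the difference $p_i - \hat{p}_i$ is a linear combination of the $q_k$'s with coefficients determined by the adjacent pants, so that
\[
q_i p'_i - q'_i p_i \;=\; (q_i \hat{p}'_i - q'_i \hat{p}_i) \;+\; (\text{correction in $q_k,q'_k$}).
\]
The main obstacle — and the one I would spend the most care on — is verifying that the $P_j$-correction terms produced by this identity match exactly the pants contributions $\Omega_j^{\rm pants}$ on the nose, with the correct signs, so that the two rewrites of $\Omega_{\rm Th}(\g,\g')$ agree and one obtains $\sum_i (q_i p'_i - q'_i p_i)$. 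This is essentially a finite combinatorial verification pair-of-pants by pair-of-pants, but the sign conventions (orientation of $\Sigma$, choice of left/right at each trivalent switch, and the differing conventions for the two topological types $\Sigma_{1,1}$ and $\Sigma_{0,4}$ of modular surface, cf.\ Remark~\ref{dual}) make it delicate.

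Finally, the disjointness statement follows almost for free: if $\g$ and $\g'$ are disjoint multicurves then up to isotopy they are carried by a common standard train track in which their supports share no switch with positive weight on both sides. Equivalently, choosing the identification of $\W(\tau)$ with $H_1$ furnished by an orientation of $\tau$ (or, in the non-orientable case, passing to the orientation double cover as in \cite[\S3.2]{Penner}), the homological intersection number of $\g$ and $\g'$ vanishes, and the proposition preceding this theorem identifies $\Omega_{\rm Th}(\g,\g')$ with that intersection number. Alternatively one can argue directly from the first part: disjointness forces $q_i p'_i = q'_i p_i$ for every $i$ because on each annulus and each pair of pants the two systems of arcs can be isotoped to be simultaneously embedded, forcing their twist-ratios about every $\s_i$ with $q_i q'_i\neq 0$ to agree, while the cases $q_i=0$ or $q'_i=0$ contribute zero trivially.
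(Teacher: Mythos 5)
Your outline coincides with the paper's own (itself only sketched) argument for this theorem: split the standard train track into the annular pieces around the $\s_i$ and the pieces inside the pairs of pants, compute the annular switch contributions to get $\sum_{i}(q_i\hat{p}'_i-q'_i\hat{p}_i)$ in Penner's twists, and verify that the pants contributions are exactly absorbed by the conversion from Penner's $\hat{p}_i$ to D.~Thurston's $p_i$ supplied by Theorem~3.5 of \cite{Maloni-Series}, which is precisely the route the paper describes. The one point I would not lean on is your ``alternative'' justification of the disjointness clause via the per-index identity $q_ip'_i=q'_ip_i$, which is stronger than the vanishing of the sum and is left unproved; your first argument (carrying by a common track and the homological interpretation of $\Omega_{\rm Th}$, passing to the orientation double cover as in Section 3.2 of Penner \cite{Penner}) is the safe one, and the paper itself gives no separate argument for that clause.
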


Notice that this symplectic form $\Omega_{\rm Th} (\cdot, \cdot)$ induces a map $\R^{2\xi} \to \R^{2\xi}$ defined by ${\bf x} = (x_{1},y_{1},\ldots,x_{\xi},y_{\xi}) \to {\bf x}^{*} = (y_{1}, - x_{1},\ldots,-y_{\xi},x_{\xi})$ such that $$\Omega_{\Th} (\i(\g),\i(\delta) ) = \i(\g) \cdot \i(\delta)^* $$ where $\cdot$ is the usual inner product on $\R^{2\xi}.$ To understand the meaning of the vector ${\bf x}^{*}$ better, we should recall the last Proposition of Section 3.2 of \cite{Penner} and some notation from Bonahon's work (see his survey paper \cite{Bon2} for a general introduction to the argument and for other further references). After rigorously defining the tangent space $T_{\a}\ML(\Sigma)$ with $\a \in \ML(\Sigma)$, Bonahon proved in \cite{Bon4} that we can interpret any tangent vector $v \in T_{\a}\ML(\Sigma)$ as a geodesic lamination with a transverse H\"older distribution. Note that the space $\W(\tau)$ can be seen as the space of H\"older distributions on the track $\tau,$ since it is defined to be the vector space of all assignments of not necessary nonnegative real numbers, one to each branch of $\tau$, which satisfy the switch conditions. He also characterised which geodesic laminations with transverse distributions correspond to tangent vectors to $\ML(\Sigma)$. Notice that if the lamination is carried by the track $\tau$, we can locally identify $T_{\a}\ML(\Sigma)$ with $\W(\tau)$ which is isomorphic to $\R^{2\xi}$. 

\begin{Theorem}[Theorem 3.2.4 \cite{Penner}]
    For any surface $\Sigma$, the Thurston product is a skew-symmetric, nondegenerate, bilinear pairing on the tangent space to the PIL manifold $\ML_{0}(\Sigma)$.
\end{Theorem}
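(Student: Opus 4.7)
The plan is to verify the three asserted properties — bilinearity, skew-symmetry, and nondegeneracy — separately, and to check that the local formula descends to a well-defined object on the tangent space of the PIL manifold $\ML_{0}(\Sigma)$.

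First, bilinearity and skew-symmetry follow directly from the defining formula
\[
\Omega_{\rm Th}(n,n') \;=\; \frac{1}{2}\sum_{v}\bigl(b_v(n)\,c_v(n') - b_v(n')\,c_v(n)\bigr).
\]
The outgoing weights $b_v, c_v$ depend linearly on the weighting, so the pairing is bilinear; exchanging $n \leftrightarrow n'$ negates each summand, which gives skew-symmetry. These parts are essentially formal.

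Next, one must check well-definedness on $T_{\alpha}\ML_{0}(\Sigma)$. The PIL atlas on $\ML_0(\Sigma)$ is given by the interiors $\stackrel{\circ}{\V}(\tau)$ of charts associated to complete birecurrent train tracks, and transitions between charts are realised by elementary moves (splits and shifts) on the tracks carrying $\alpha$. The strategy is to verify by a direct local computation that $\Omega_{\rm Th}$ is preserved under the linear maps on $\W(\tau)$ induced by each such move: only the branches and switches in a small neighbourhood of the move are affected, so the switch-condition bookkeeping reduces to a finite case analysis.

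The main point is nondegeneracy, and here I would exploit Theorem \ref{disj}. Fix a complete birecurrent train track $\tau$ carrying $\alpha$, so that $\W(\tau) \cong T_{\alpha}\ML_0(\Sigma)$, and use the Dehn--Thurston coordinates $(q_1,p_1,\ldots,q_\xi,p_\xi)$ to identify $\W(\tau)$ with $\R^{2\xi}$. Theorem \ref{disj} gives
\[
\Omega_{\rm Th}(n,n') \;=\; \sum_{i=1}^{\xi}\bigl(q_i p'_i - q'_i p_i\bigr)
\]
on the dense subset of rational weightings (multi-curves supported in the chosen chart). By continuity and bilinearity this extends to all of $\W(\tau)$, and the resulting form is the standard symplectic pairing on $\R^{2\xi}$, which is manifestly nondegenerate.

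I expect the well-definedness step to be the principal obstacle: skew-symmetry and bilinearity are essentially tautological from the formula, and nondegeneracy, once Theorem \ref{disj} is in hand, reduces to the nondegeneracy of the standard symplectic form on $\R^{2\xi}$. By contrast, checking invariance under every elementary move requires tracking how the incoming/outgoing decomposition at each switch rearranges and verifying that the net contribution to $\Omega_{\rm Th}$ is unchanged; this is the delicate computational heart of the argument, though it is entirely local and combinatorial.
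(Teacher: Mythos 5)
This statement is quoted directly from Penner (Theorem 3.2.4); the paper gives no proof of its own, and the argument it implicitly leans on is the homological one recalled just above it (Lemmas 3.2.1--3.2.2 of \cite{Penner}), in which $\Omega_{\rm Th}$ is identified, after passing to oriented (covers of) tracks, with the homology intersection pairing, from which Penner deduces nondegeneracy. Your route is genuinely different: you reduce nondegeneracy to the explicit Dehn--Thurston formula of Theorem \ref{disj}, so that on a chart the Thurston product becomes the standard symplectic form $\sum_{i=1}^{\xi}(q_ip'_i-q'_ip_i)$ on $\R^{2\xi}$. That is a legitimate and, in the context of this paper, rather natural alternative (it meshes with the coordinates actually used in Section \ref{sec3}), and it is not circular, since Theorem \ref{disj} is proved by direct computation on standard tracks and does not use nondegeneracy. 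Two points need to be made explicit for your argument to close. First, knowing that $\Omega_{\rm Th}$ agrees with the pull-back of the standard form under the Dehn--Thurston coordinate map only yields nondegeneracy if that map is a linear \emph{isomorphism} $\W(\tau)\to\R^{2\xi}$ for the standard complete birecurrent tracks; this is true (it is Penner's standard-model parametrisation, and in particular $\dim\W(\tau)=2\xi$), but it must be invoked, since a degenerate form can also pull back from a nondegenerate one along a map with kernel. Second, the passage from pairs of multicurves to arbitrary weightings requires that the integral points of $\V(\tau)$ span $\W(\tau)$, which follows from recurrence of the standard tracks; with a spanning set of such vectors the bilinear form is determined, so "continuity" is not really needed. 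Finally, note that nondegeneracy is a pointwise statement, so one fixed chart per point suffices; the elementary-move invariance you single out as the delicate step is only needed for well-definedness of the pairing on the tangent space, and there you may simply cite Penner's chart-compatibility rather than redo the case analysis. Penner's homological approach buys a coordinate-free proof valid for all tracks at once; yours buys concreteness and the direct link to the Dehn--Thurston coordinates used in the main theorems.
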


\subsubsection{Complex of curves and marking complex} \label{ssub:complex}

In this section, we review the definitions of the complex of curves and of the marking complex. We will use this language in the last Section where we will prove our main Theorems. While it is not essential to use this language, we believe most readers will already be familiar with these definitions and will find easier to understand the ideas of our proofs. In addition, these tools will shorten the proofs. We summarise briefly the definition of simplicial complex and few related definitions which we will need later on, and we refer to Hatcher \cite{Hatcher} for a complete discussion. 

\begin{Definition}
     Given $K^{(0)}$ a set (of \textit{vertices}), then $K \subset \P(K^{(0)})$, where $\P(K^{(0)})$ is the power set of $K^{(0)}$, is a \textit{simplicial complex} if \begin{enumerate}
                 \item $\varnothing \notin K$;
                 \item $\forall \tau \subset \sigma \in K, \; \tau \neq \varnothing \Rightarrow \tau \in K.$ 
            \end{enumerate}
     Given $\sigma \in K$, we define the \textit{link of} $\sigma$ to be the set $\mathrm{lk}_{K}(\sigma) = \{\tau \in K | \tau \cap \sigma = \varnothing, \; \tau \cup \sigma \in K\}$.
\end{Definition}

\begin{Definition}
    Given a surface $\Sigma$, let $\CC^{(0)}(\Sigma)$ be the set of isotopy classes of essential, nonperipheral, simple closed curves in $\Sigma$. Then we define the \textit{complex of curves} $\CC(\Sigma)$ as the simplicial complex with vertex set $\CC^{(0)}(\Sigma)$ and where multicurves gives simplices. In particular $k$--simplices of $\CC(\Sigma)$ are $(k + 1)$--tuples $\{\g_0,\ldots,\g_k\}$ of distinct nontrivial free homotopy classes of simple, nonperipheral closed curves, which can be realised disjointly.
\end{Definition}

Note that this complex is obviously finite--dimensional by an Euler characteristic argument, and is typically locally infinite. If $\Sigma = \Sigma_{g,b}$, then the dimension is $\mathrm{dim}\left(\CC(\Sigma)\right) = \xi(\Sigma)-1 = 3g-4+b.$

Note that the cases of lower complexity, which are called \textit{sporadic} by Masur and Minsky \cite{MM1}, require a separate discussion. In particular if $\Sigma = \Sigma_{0,b}$ with $b \leqslant 3$, then $\CC(\Sigma)$ is empty. If $\Sigma = \Sigma_{1,0}, \Sigma_{1,1}, \Sigma_{0,4}$, using this definition, $\CC(\Sigma)$ is disconnected (in fact, it is just an infinite set of vertices). So we slightly modify the definition, in such a way that edges are placed between vertices corresponding to curves of smallest possible intersection number ($1$ for the tori, $2$ for the sphere). Finally  also in the case of an annulus, that is $\Sigma = \Sigma_{0,2} = \A$, $\CC(\Sigma)$ needs to be defined in a different way, which we do not not discuss here as it is not needed. We refer the interested to Masur and Minsky \cite{MM2} for a detailed discussion. 

We define now the marking complex. Before defining it, we need to give few additional definitions.

\begin{Definition}\label{mark}
    Given a surface $\Sigma$, a \textit{complete clean marking} $\mu$ on $\Sigma$ is a pants decomposition $\mathrm{base}(\mu) = \{\g_1, \ldots, \g_\xi\}$, called the \textit{base} of the marking, together with the choice of dual curves $D_i$ for each $i = 1, \ldots, \xi$ such that $D_i \cap \g_j = \varnothing$ for any $j \neq i$.
\end{Definition}

There are two types of \textit{elementary moves} on a complete clean marking:
\begin{enumerate}
    \item \textit{Twist}: Replace a dual curve $D_i$ by another dual curve $D'_i$ obtained from $D_i$ by a Dehn--twist or an half--twist around $\g_i$. 
    \item \textit{Flip}: Exchange a pair $(\g_i,D_i)$ with a new pair $(\g'_i,D'_i) := (D_i,\g_i)$ and change the dual curves $D_j$ with $j\neq i$ so that they will satisfy the property described in the Definition \ref{mark}. This operation is called \textit{cleaning the marking} and it is not uniquely defined.
\end{enumerate}

We will only need to use the base of the markings, so we will not describe these operations more deeply. The interested reader can refer to \cite{MM2} for a deeper analysis on this topic.

\begin{Definition}
    Given a surface $\Sigma$, let $\mathcal{MC}^{(0)}(\Sigma)$ be the set of complete clean markings in $\Sigma$. Then we define the \textit{marking complex} $\mathcal{MC}(\Sigma)$ as the simplicial complex with vertex set $\mathcal{MC}^{(0)}(\Sigma)$ and where two vertices are connected by an edge if the two markings are connected by an elementary move.
\end{Definition}

\subsection{Plumbing construction} \label{sub2.2}

In this section we review the plumbing construction which gives us the complex parameters $\t_i$ for the Maskit embedding. The idea of the plumbing construction is to manufacture $\Sigma$ by gluing triply punctured spheres across punctures. There is one triply punctured sphere for each pair of pants $P \in \P$, and the gluing across the pants curve $\s_j$ is implemented by a specific projective map depending on a parameter $\tau_j \in \C$. The $\tau_j$ will be the parameters of the resulting holonomy representation $\rho_{\ttau} \co \pi_1(\Sigma) \to PSL(2,\C)$ with $\ttau = (\tau_{1},\ldots,\tau_{\xi})$. 

More precisely, we first fix an identification of the interior of each pair of pants $P_i$ to a standard triply punctured sphere $\mathbb P$. We endow $\mathbb P$ with the projective structure coming from the unique hyperbolic metric on a triply punctured sphere. Then the gluing is carried out by deleting open punctured disk neighbourhoods of the two punctures in question and gluing horocyclic annular collars round the resulting two boundary curves, see Figure~\ref{figure2_4}. 
\begin{figure}
\centering 
\includegraphics[height=2.9cm]{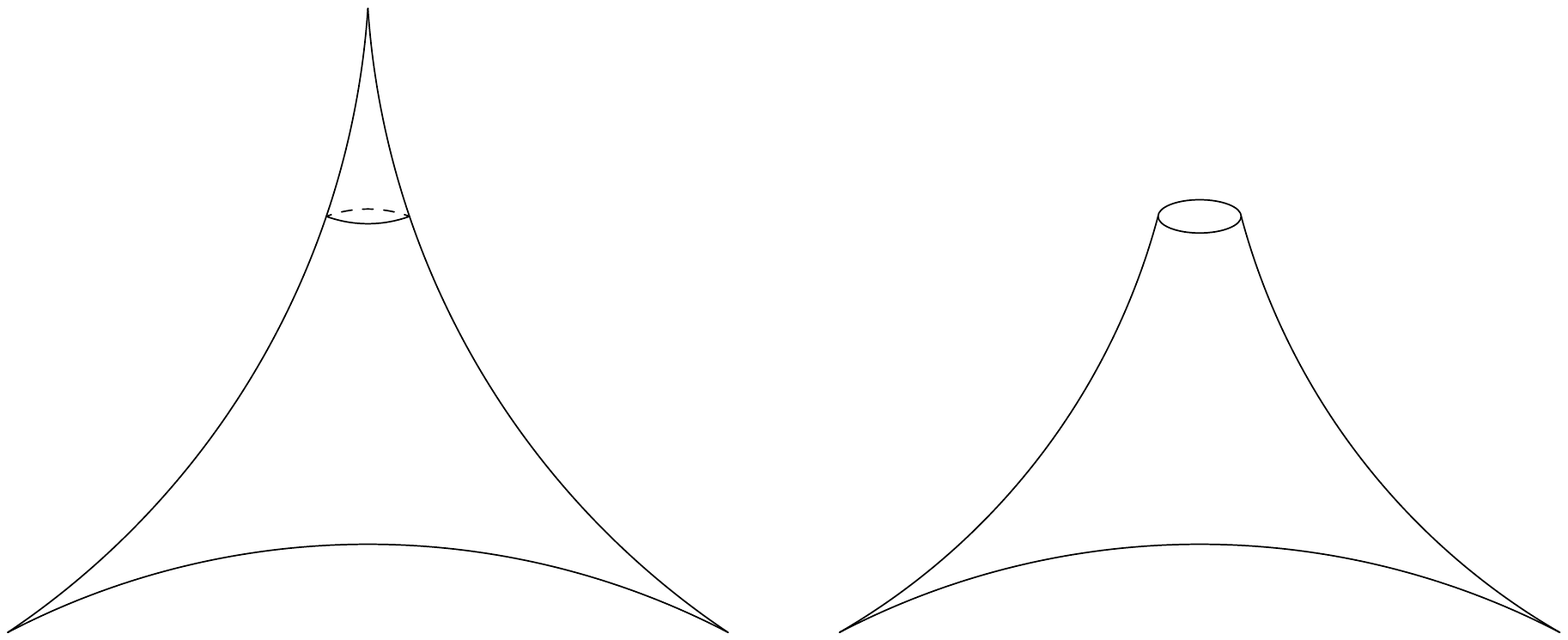} \hspace{0.4cm} 
\includegraphics[height=2.9cm]{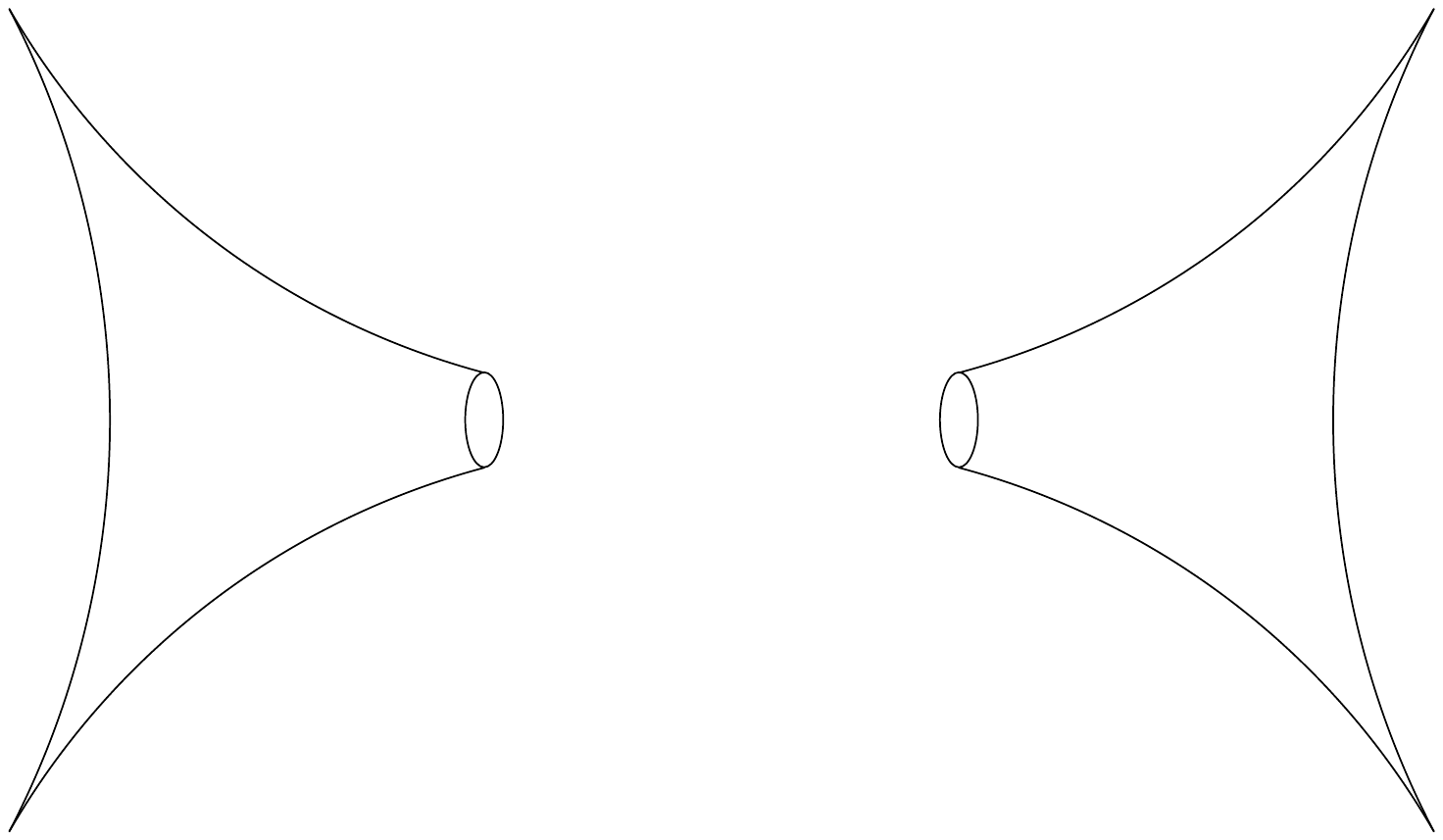} \caption{Deleting horocyclic neighbourhoods of the punctures and preparing to glue.} \label{figure2_4} 
\end{figure}

\subsubsection{The gluing} 
\label{ssub:the_gluing}

First recall (see for example \cite{Indra} p.\,207) that any triply punctured sphere is isometric to the standard triply punctured sphere $\mathbb P = \HH /\Gamma$, where $$\Gamma = \Bigl <
\begin{pmatrix}
1 & 2 \\
0 & 1 \\
\end{pmatrix}
, 
\begin{pmatrix}
1 & 0 \\
2 & 1 \\
\end{pmatrix}
\Bigr >.$$ Fix a standard fundamental domain for $\Gamma$, as shown in Figure~\ref{figure4_2}, so that the three punctures of $\mathbb P $ are naturally labelled $0,1,\infty$. Let $\Delta_0$ be the ideal triangle with vertices $ \{0,1,\infty\}$, and $\Delta_1$ be its reflection in the imaginary axis. We sometimes refer to $\Delta_0$ as the white triangle and $\Delta_1$ as the black.

\begin{figure}
\centering 
\includegraphics[height=5cm]{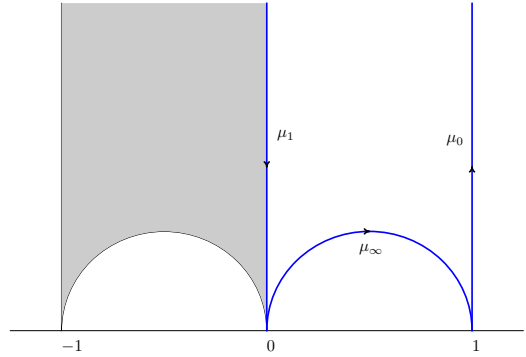} 
\caption{The standard fundamental domain for $\Gamma$. The white triangle $\Delta_0$ is unshaded.} \label{figure4_2} 
\end{figure}

\begin{figure} 
\centering 
\includegraphics[height=15cm]{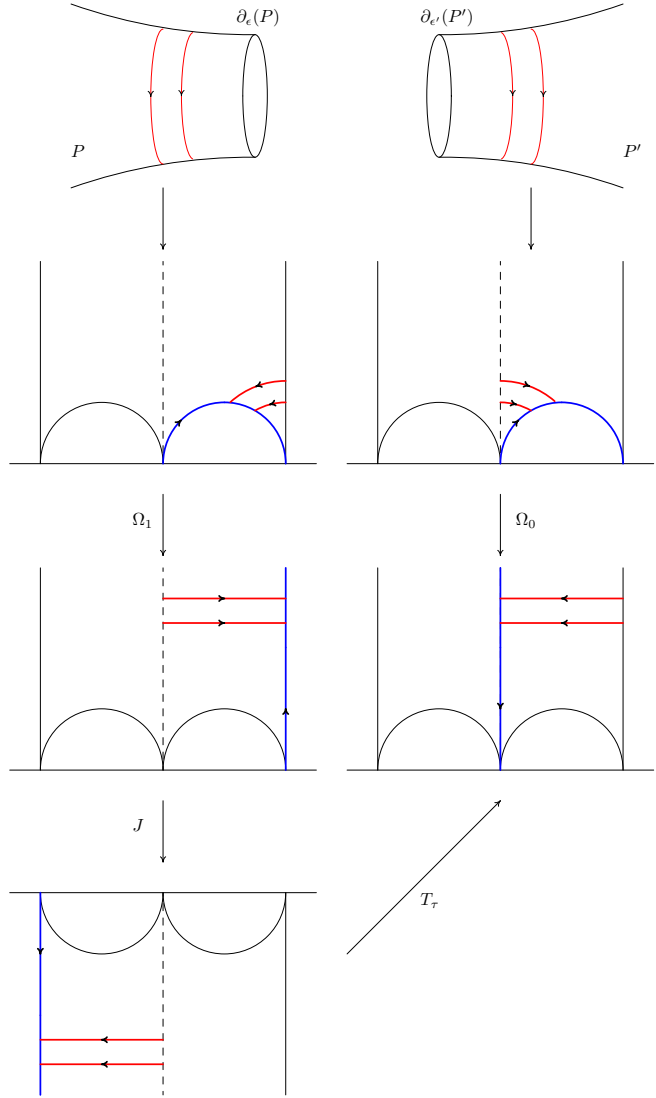} 
\caption{The gluing construction when $\e = 1$ and $\e' = 0$.}
\label{figure4_10} 
\end{figure}

With our usual pants decomposition $\P$, fix homeomorphisms $\Phi_i$ from the interior of each pair of pants $P_i$ to $\mathbb P$. This identification induces a labelling of the three boundary components of $ P_{i}$ as $0, 1, \infty$ in some order, fixed from now on. We denote the boundary labelled $\e \in \{0,1,\infty \}$ by $\dd_{\e} P_{i}$. The identification also induces a colouring of the two right angled hexagons whose union is $P_i$, one being white and one being black. Suppose that the pants $P, P' \in \P$ are adjacent along the pants curve $\s$ meeting along boundaries $\dd_{\e}P$ and $\dd_{\e'}P'$. (If $P =P'$ then clearly $\e \neq \e'$.) The gluing across $\s$ will be described by a complex parameter $\tau$ with $\Im \tau >0$, called the \emph{plumbing parameter} of the gluing. 

Let $\Delta_0 \subset \HH$ be the ideal `white' triangle with vertices $0,1,\infty$. Notice that there is a unique orientation preserving symmetry $\Omega_{\epsilon}$ of $\Delta_{0}$ which sends  the vertex $\epsilon \in \{0,1,\infty\}$ to $ \infty$: 
\begin{equation*}
    \label{eqn:standardsymmetries}
\Omega_{0} = \begin{pmatrix}
	1  &  -1  \\
	1 &  0   
	\end{pmatrix},  \ \ 
 \Omega_{1} =  \begin{pmatrix}
0 &  -1  \\
	1 &  -1 \end{pmatrix},  \ \ 
	\Omega_{\infty} = \Id = \ \begin{pmatrix}
	1  &  0  \\
	0 &  1 	\end{pmatrix}.
\end{equation*}

As described in Figure~\ref{figure4_10}, first we use the maps $\Omega_{\epsilon}$ to reduce to the case $\e = \e' = \infty$. In that case, we first need to reverse the direction in the left triangle $\Delta_{0}$, by the map $J$ which is a rotation about the origin of an angle $\pi$, and then we should translate it, by the map $T_{\tau}$ where 
\begin{equation*}
\label{eqn:standardsymmetries1} J = 
\begin{pmatrix}
	-i & 0 \\
	0 & i 
\end{pmatrix}
, \ \ T_{\tau}= 
\begin{pmatrix}
	1 & \tau \\
	0 & 1.
\end{pmatrix}
\end{equation*}

The gluing map between the pants $P, P' \in \P$ is then described by $$\Omega_{\e}^{-1} J^{-1}T_{\tau}^{-1}\Omega_{\e'}.$$

For a general discussion, we refer to Section 4 and 5 of \cite{Maloni-Series}. The recipe for gluing two pants apparently depends on the direction of travel across their common boundary. Lemma 4.2 in \cite{Maloni-Series} shows that, in fact, the gluing in either direction is implemented by the same recipe and uses the same parameter $\tau$.

\begin{Remark}[Relationship with Kra's construction]
    As explained in detail in Section 4.4 of \cite{Maloni-Series}, Kra's plumbing construction (see Kra \cite{Kra2}) is essentially identical to our construction. The difference is that we implement the gluing in the upper half space $\HH$ without first mapping to the punctured disk $\mathbb D_{\ast}$. In particular the precise relationship between our plumbing parameter $\tau$ and Kra's one $t_{K}$ is given by $$\tau = -\frac{i}{\pi} \log t_{K}.$$ 
\end{Remark}

\subsection{Top Terms' Relationship} \label{ssub:top}

We can now state the main result of our previous work which will be fundamental for the proof of our main theorems. The plumbing construction described in Section \ref{sub2.2} endows $\Sigma$ with a projective structure whose associated holonomy representation $\rho_{\ttau} \co \pi_1(\Sigma) \to PSL(2,\C)$ depends holomorphically on the plumbing parameters ${\ttau} = (\tau_{1},\ldots,\tau_{\xi})$. In particular, the traces of all elements $\rho(\g) , \g \in \pi_1(\Sigma)$, are polynomials in the $\tau_i$. Theorem A of \cite{Maloni-Series} is a very simple relationship between the coefficients of the top terms of $\rho(\g) $, as polynomials in the $\tau_i$, and the Dehn--Thurston coordinates of $\g$ relative to $\P$. 

\begin{Theorem}[Top Terms' Relationship]\label{top}
    Let $\gamma$ be a connected simple closed curve on $\Sigma$,such that its Dehn--Thurston coordinates are $\i(\g) = (q_1,p_1,\ldots,q_{\xi},p_{\xi})$. If $\g$ not parallel to any of the pants curves $\s_i$, then $\tr \rho(\g)$ is a polynomial in $\tau_{1}, \cdots , \tau_{\xi}$ whose top terms are given by: 
    \begin{align*}
    	\tr \rho(\g) &=\pm i^{q} 2^{h} \Bigl(\tau_{1}+\frac{(p_{1}-q_{1})}{q_{1}}\Bigr)^{q_{1}} \cdots \Bigl(\tau_{\xi}+\frac{(p_{\xi}-q_{\xi})}{q_{\xi}}\Bigr)^{q_{\xi}} + R,\\
    	& = \pm i^{q} 2^{h} \left(\tau_{1}^{q_{1}}\cdots\tau_{\xi}^{q_{\xi}} +\sum_{i=1}^{\xi} (p_{i}-q_{i}) \tau_{1}^{q_1} \cdots \tau_i ^{q_{i}-1}\cdots\tau_{\xi}^{q_{\xi}} \right) + R 
    \end{align*}
    where 
    \begin{itemize}
    	\item $q =\sum_{i=1}^{\xi}q_{i}>0$; 
    	\item $R$ represents terms with total degree in $\tau_{1} \cdots \tau_{\xi}$ at most $q - 2$ and of degree at most $q_{i}$ in the variable $\tau_{i}$; 
    	\item $h= h(\g)$ is the total number of $scc$-arcs in the standard representation of $\g$ relative to $\P$, see below. 
    \end{itemize}
    If $q(\g) =0$, then $\gamma = \s_i$ for some $i$, $\rho(\g)$ is parabolic, and $\tr \rho(\g) = \pm 2$.
\end{Theorem}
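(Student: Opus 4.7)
The plan is to compute $\tr \rho(\g)$ directly from the plumbing construction of Section \ref{sub2.2} by writing $\rho(\g)$ as an explicit word in the gluing matrices and the matrices generating the triply punctured sphere groups. Represent $\g$ in standard position relative to $\P$: this decomposes $\g$ into $q=\sum_i q_i$ arcs, each sitting inside some pair of pants $P$ and meeting $\dd P$ at punctures. Lifting $\g$ to $\HH$ and following it, each traversal of an arc contributes a matrix in the fixed standard group $\Gamma$ (independent of $\ttau$), while each of the $q_i$ crossings of $\s_i$ contributes a gluing matrix of the form $\Omega_\e^{-1} J^{-1} T_{\t_i}^{-1}\Omega_{\e'}$, which is affine in $\t_i$. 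Hence $\rho(\g)$ is a product of $q$ affine factors in $\t_i$'s and constant matrices, proving that $\tr \rho(\g)$ is a polynomial of multidegree at most $(q_1,\ldots,q_\xi)$ in $(\t_1,\ldots,\t_\xi)$.

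Next, I would extract the top degree term by retaining only the $\t_i$-part of each $T_{\t_i}$, i.e.\ the nilpotent matrix $\bigl(\begin{smallmatrix}0&\t_i\\0&0\end{smallmatrix}\bigr)$, and multiplying through. Because this matrix has rank one, the product collapses to a product of scalar factors, each scalar being the upper entry of the appropriate conjugate of this matrix by the arc matrices and the $\Omega_\e$'s. This would give the monomial $\pm i^q 2^h \t_1^{q_1}\cdots \t_\xi^{q_\xi}$: the factor $i^q$ comes from the matrix $J$ appearing $q$ times, while $2^h$ is a combinatorial count of the ideal triangles in $\mathbb P$ crossed by the ``scc-arcs'' (arcs joining a pants curve to itself in its modular surface) in the standard position of $\g$, each such crossing producing a factor $2$ from elements $\bigl(\begin{smallmatrix}1&2\\0&1\end{smallmatrix}\bigr)$ of $\Gamma$.

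For the subleading terms of multidegree $(q_1,\ldots,q_i-1,\ldots,q_\xi)$, only one gluing across $\s_i$ contributes the constant part of $T_{\t_i}$; combinatorially this amounts to a sum of $q_i$ monomials, each of which can be computed using the matching rule in the annulus round $\s_i$. The matching data is exactly the twist parameter $p_i$ (in D.~Thurston's convention), so this sum reorganises as $(p_i-q_i)\t_1^{q_1}\cdots \t_i^{q_i-1}\cdots \t_\xi^{q_\xi}$, the shift by $-q_i$ reflecting the fact that the $\Omega_\e$'s and $J$ contribute a uniform additive correction coming from their off-diagonal entries. Summing over $i$ yields the claimed expression, which then factors as displayed in the theorem.

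The main obstacle will be the second step above: computing the leading coefficient $\pm i^q 2^h$ requires a careful bookkeeping of how the arcs of $\g$ pass through the white and black ideal triangles of $\mathbb P$ and how the symmetries $\Omega_\e$ permute the punctures $\{0,1,\infty\}$. One must verify that the sign and the power of $2$ are independent of the local choices made in putting $\g$ in standard position, which forces a combinatorial case analysis in each pair of pants distinguishing the cases where $M(\s_i)\cong \Sigma_{1,1}$ or $\Sigma_{0,4}$. Once this is settled, the subleading analysis is a linear computation once the matching convention in the annulus is identified with the twist $p_i$ via Theorem 3.5 of \cite{Maloni-Series}.
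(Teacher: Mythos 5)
First, a point of comparison you could not have known: this paper does not prove Theorem \ref{top} at all. It is quoted as Theorem A of \cite{Maloni-Series}, and the present article only uses it as an input. So your proposal can only be measured against the proof in that earlier paper, whose overall strategy it does share: put $\g$ in standard position relative to $\P$, write $\rho(\g)$ as a product of constant matrices (elements of $\Gamma$, the symmetries $\Omega_{\e}$, and $J$) together with one factor $T_{\tau_i}$, affine in $\tau_i$, for each of the $q_i$ crossings of $\s_i$, and then track the top-degree terms. Your first step, the multidegree bound (degree at most $q_i$ in $\tau_i$, total degree at most $q$), is correct and essentially immediate from this description.

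The genuine gaps are in your second and third steps, which is where all the content of the theorem lies. Keeping only the nilpotent part $\bigl(\begin{smallmatrix}0&\tau_i\\0&0\end{smallmatrix}\bigr)$ of each $T_{\tau_i}$ does reduce the top term to a product of rank-one matrices, but that only shows the coefficient of $\tau_1^{q_1}\cdots\tau_\xi^{q_\xi}$ is some scalar; the theorem asserts it equals $\pm i^{q}2^{h}$, and in particular is nonzero, and nothing in the sketch excludes cancellation or pins down the power of $2$ --- attributing $2^{h}$ to entries of $\bigl(\begin{smallmatrix}1&2\\0&1\end{smallmatrix}\bigr)$ is a heuristic, and you yourself flag the required case analysis as the ``main obstacle''. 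More seriously, the subleading step is asserted rather than argued: that the $q_i$ contributions of multidegree $(q_1,\ldots,q_i-1,\ldots,q_\xi)$ (one for each choice of the constant part of a single $T_{\tau_i}$) sum to exactly $(p_i-q_i)$ times the leading coefficient --- equivalently, that this coefficient computes the D.~Thurston twist --- is precisely the heart of the theorem. In \cite{Maloni-Series} this takes up most of the paper and is done by a careful induction, with explicit computations in the modular surfaces $\Sigma_{1,1}$ and $\Sigma_{0,4}$, control of how Dehn twisting about $\s_i$ changes the trace polynomial, and the comparison of twist conventions (their Theorem 3.5). Saying it is ``a linear computation once the matching convention is identified with the twist $p_i$'' presupposes exactly what has to be proved; note also that the degree-$(q-1)$ terms do not only come from one obvious source, and the claim that $R$ has total degree at most $q-2$ requires showing that all other such terms cancel. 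So your proposal is a reasonable road map, consistent in outline with the cited proof, but as written it establishes neither the value of the leading coefficient, nor the subleading coefficients, nor the degree bound on $R$.
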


The non-negative integer $h = h(\g)$ is defined as follows. The curve $\g$ is first arranged to intersect each pants curve minimally. In this position, it intersects a pair of pants $P$ in a number of arcs joining the boundary curves of $P$. We call one of these an \emph{$scc$-arc} (short for same-(boundary)-component-connector) if it joins one boundary component to itself, and denote by $h$ the total number of $scc$-arcs, taken over all pants in $\P$. Note that some authors call the scc-arcs \textit{waves}.

\begin{Remark}
    As noted in Section 4.2 \cite{Maloni-Series}, with our convention the base point for the gluing construction is when $\Re \t_i = 1$. It would be more natural to have, as base point, $\Re \t_i = 0$. That can be achieved by changing the fundamental domain for the standard triply punctured sphere. In particular, one should have as the white triangle $\Delta_0$ the set $\{z\in \C | \Re z \in (-\frac{1}{2}, \frac{1}{2}), |z| > \frac{1}{2}\}$. This new parameter, equal the old one minus 1, would also make the formula above neater. In fact the formula, with this new parameter, also called call $\t_i$, becomes:
    \begin{align*}
    	\tr \rho(\g) &=\pm i^{q} 2^{h} \Bigl(\tau_{1}+\frac{p_{1}}{q_{1}}\Bigr)^{q_{1}} \cdots \Bigl(\tau_{\xi}+\frac{p_{\xi}}{q_{\xi}}\Bigr)^{q_{\xi}} + R,\\
    	& = \pm i^{q} 2^{h} \left(\tau_{1}^{q_{1}}\cdots\tau_{\xi}^{q_{\xi}} +\sum_{i=1}^{\xi} p_{i}\tau_{1}^{q_1} \cdots \tau_i ^{q_{i}-1}\cdots\tau_{\xi}^{q_{\xi}} \right) + R 
    \end{align*}
    
    From now on we will use this new parameter which is equal the $\t_i--$parameter in \cite{Maloni-Series} minus 1.
\end{Remark}

\subsection{Maskit embedding}\label{sub:maskit}

In this section we recall the definition of the \emph{Maskit embedding} of $\Sigma$, following Series' article~\cite{Series1}, see also~\cite{Maskit}. Let $\RR(\Sigma)$ be the set of representations $\rho \co \pi_1(\Sigma) \to PSL(2,\C)$ modulo conjugation in $PSL(2,\C)$. Let $\M \subset \cal R$ be  the subset of representations for which: 
\begin{enumerate}\renewcommand{\labelenumi}{(\roman{enumi})}
\item the group $G= \rho\left(\pi_1(\Sigma)\right)$ is discrete (Kleinian) and $\rho$ is an isomorphism, 
\item the images of $\s_i$, $i=1, \ldots, \xi$, are parabolic,
\item all components of the regular set $\Omega(G)$ are simply connected and there is exactly one invariant component  $\Omega^+(G)$,
\item the quotient $\Omega(G)/G$ has $k + 1$ components (where $k = 2g-2+n$ if $\Sigma = \Sigma_{(g,n)}$), $\Omega^+(G)/G$ is homeomorphic to $\Sigma$ and the other components are triply punctured spheres.
\end{enumerate}
In this situation, see for example Section 3.8 of Marden~\cite{Marden}, the corresponding $3$--manifold $M_{\rho} = \HH^3/G$ is topologically $\Sigma \times (0,1)$. Moreover $G$ is a geometrically finite cusp group on the boundary (in the algebraic topology) of the set of quasifuchsian representations of $\pi_1(\Sigma)$. The `top' component $\Omega^+/G $ of the conformal boundary may be identified to $\Sigma \times \{1\}$ and is homeomorphic to $\Sigma$. On the `bottom' component  $\Omega^-/G $, identified to  $\Sigma \times \{0\}$, the pants curves $\s_1,\ldots, \s_\xi$ have been pinched, making  $\Omega^-/ G$ a union of $k$ triply punctured spheres glued across punctures corresponding to the curves $\s_i$. The conformal structure on $\Omega^+/G $, together with the pinched curves $\s_1,\ldots, \s_\xi$, are the \emph{end invariants} of $M_{\rho}$ in the sense of Minsky's ending lamination theorem. Since a triply punctured sphere is rigid, the conformal structure on $\Omega^-/ G$ is fixed and independent of $\rho$, while the structure on  $\Omega^+/ G$  varies. It follows from standard Ahlfors--Bers theory, using the Measurable Riemann Mapping Theorem (see again Section 3.8 of~\cite{Marden}), that there is a unique group corresponding to each possible conformal structure on $\Omega^+/ G$. Formally, the \emph{Maskit embedding} of the \Teich space of $\Sigma$ is the map $\teich(\Sigma) \to \cal R$ which sends a point  $X \in \teich(\Sigma)$ to the unique group $G \in \M$ for which $\Omega^+/ G$ has the marked conformal structure $X$. 

\subsubsection{Relationship between the plumbing construction and the Maskit embedding}

In the Section \ref{sub2.2}, given a pants decomposition $\PC = \{ \s_1, \ldots,\s_{\xi}\}$ of $\Sigma$, we constructed a family of projective structures on $\Sigma$, to each of which is associated a natural holonomy representation $\rho_{\ttau} \co \pi_1(\Sigma) \to PSL(2,\C)$. Proposition 4.4 of~\cite{Maloni-Series} proves that our plumbing construction described above, for suitable values of the parameters, gives exactly the \emph{Maskit embedding} of $\Sigma$.  

\begin{Proposition}[Proposition 4.4 \cite{Maloni-Series}]
    Suppose that ${\ttau} \in \HH^{\xi}$ is such that the associated developing map $Dev_{\ttau} \co \tilde \Sigma \to \hat \C$ is an embedding. Then the holonomy representation $ \rho_{\ttau}$ is a group isomorphism and $G = \rho_{\ttau}(\pi_1(\Sigma)) \in \cal M$.
\end{Proposition}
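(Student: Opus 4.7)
The plan is to verify in turn the four conditions (i)--(iv) defining $\M$, exploiting the standard dictionary between projective structures with injective developing map and Kleinian groups. Set $\Omega^+ := Dev_{\ttau}(\tilde\Sigma)$; by hypothesis this is a simply connected open subset of $\hat\C$. The equivariance $Dev_{\ttau} \circ \g = \rho_{\ttau}(\g) \circ Dev_{\ttau}$, combined with injectivity of $Dev_{\ttau}$ and freeness of the deck action of $\pi_1(\Sigma)$ on $\tilde\Sigma$, forces $\rho_{\ttau}$ to be injective, since any $\g$ in its kernel would act trivially on $\Omega^+$ and hence on $\tilde\Sigma$. The same equivariance transports the properly discontinuous deck action to a free properly discontinuous action of $G := \rho_{\ttau}(\pi_1(\Sigma))$ on $\Omega^+$; hence $G$ is discrete, $\Omega^+ \subset \Omega(G)$, and $\Omega^+/G$ is canonically homeomorphic to $\Sigma$. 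Since $\Omega^+$ is simply connected and $G$-invariant, standard Kleinian group theory (or a direct covering-space argument) guarantees it is a single, invariant component of $\Omega(G)$. This settles (i), provides the invariant component required in (iii), and accounts for the $\Sigma$-piece of (iv).

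To handle (ii), I would unwind the plumbing model around each pants curve $\s_i$. A representative loop of $\s_i$ is the core of the annular collar glued across two adjacent pants, each identified with a copy of the standard triply punctured sphere $\mathbb P = \HH/\Gamma$. On each side, this loop lifts to a horocycle encircling a cusp of $\Gamma$, whose $\Gamma$-stabiliser is the parabolic subgroup generated by $z \mapsto z + 2$. The gluing recipe $\Omega_{\e}^{-1} J^{-1} T_{\tau_i}^{-1} \Omega_{\e'}$ of Section \ref{ssub:the_gluing} conjugates one such parabolic generator to the other, so $\rho_{\ttau}(\s_i)$ is itself parabolic.

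For the remaining components of $\Omega(G)$, recall that the domain of discontinuity of the standard group $\Gamma$ is $\HH \cup \HH^-$, where $\HH^-$ denotes the lower half-plane; the two pieces meet in $\hat\C$ only at the cusp fixed points on $\R \cup \{\infty\}$. The plumbing construction combines only the upper half-planes (and their $G$-translates) to assemble $\Omega^+$, while each lower half-plane $\HH^-_i$ attached to the $i$-th pants $P_i$ is $\Gamma_i$-invariant, simply connected and disjoint from $\Omega^+$ (here $\Gamma_i \le G$ is the conjugate of $\Gamma$ associated to $P_i$). Its $G$-orbit descends to a component of $\Omega(G)/G$ homeomorphic to $\HH^-_i/\Gamma_i$, i.e.\ a triply punctured sphere. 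Ranging over the $k = 2g-2+b$ pants yields $k$ simply connected components, each quotient being a triply punctured sphere, which together with $\Omega^+$ gives the required $k+1$ conformal pieces and finishes (iii) and (iv).

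The main obstacle is ruling out any further component of $\Omega(G)$ beyond $\Omega^+$ and the $G$-translates of the $\HH^-_i$. My approach would be to invoke the Klein--Maskit combination theorem along the dual graph of $\PC$: the hypothesis $\Im \tau_j > 0$ is exactly what ensures that the plumbing identifications across $\s_j$ pair \emph{precisely invariant} horocyclic discs around the matched cusps, so iterating combination over the dual graph produces the full group $G$ together with an explicit description of $\Omega(G)$ and its limit set, which is the one claimed. An alternative, and in some sense cleaner, route is a topological count via Ahlfors finiteness: the Euler characteristics $\chi(\Omega^+/G) = \chi(\Sigma)$ and $k$ copies of $\chi(\text{triply punctured sphere}) = -1$ already sum to $2\chi(\Sigma) = \chi(\dd M_\rho)$, saturating Bers' inequality and leaving no room for extra components.
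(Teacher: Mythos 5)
Your opening steps are sound: injectivity of $\rho_{\ttau}$, discreteness of $G$, $\Omega^+=Dev_{\ttau}(\tilde\Sigma)\subset\Omega(G)$ with $\Omega^+/G\cong\Sigma$, and parabolicity of the $\rho_{\ttau}(\s_i)$ (each $\s_i$ is peripheral in an adjacent pair of pants, so its holonomy is conjugate into the standard thrice-punctured sphere group) all come out as you say. Note that the present paper does not reprove this statement, it quotes it from \cite{Maloni-Series}; judged on its own, your argument has genuine gaps exactly where the real work lies. First, ``simply connected and $G$-invariant'' does not imply that $\Omega^+$ is a connected component of $\Omega(G)$: for a Fuchsian group of the second kind the upper half-plane is a simply connected invariant subdomain of the (connected) regular set and is not a component, so this step needs an actual argument. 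Second, everything you assert about the lower half-planes $\HH^-_i$ --- that they lie in $\Omega(G)$, are disjoint from $\Omega^+$ and from their $G$-translates, have $G$-stabiliser exactly $\Gamma_i$, and are full components --- is precisely the content to be proved, not a consequence of the picture; and your Euler-characteristic count via Ahlfors finiteness and Bers' area inequality can only exclude further components \emph{after} these facts are in place, so it cannot substitute for them.

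Third, the proposed Klein--Maskit completion rests on a false premise: $\Im\tau_j>0$ is not ``exactly what ensures'' that the plumbing identifications pair precisely invariant horocyclic discs. If it were, every $\ttau\in\HH^{\xi}$ would yield a group in $\M$ and the hypothesis that $Dev_{\ttau}$ is an embedding would be redundant; in fact $\M$ is a proper subset of $\HH^{\xi}$ (already for the once-punctured torus the Maskit slice lies strictly above a fractal boundary curve inside the upper half-plane), and for small $\Im\tau_j$ discreteness and precise invariance fail. Any combination-theorem argument must extract the precise-invariance hypotheses from the assumption that the developing map is an embedding (or from sufficiently large $\Im\tau_j$), and that extraction is the missing idea in your sketch.
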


\subsection{Three manifolds and pleating rays} \label{ssub:manifolds}

Let $M$ be a hyperbolic 3--manifold, that is a complete 3-dimensional Riemannian manifold of constant curvature $-1$ such that the fundamental group $\pi_{1}(M)$ is finitely generated. We exclude the somewhat degenerate case $\pi_{1}(M)$ has an abelian subgroup of finite index, that is $\pi_{1}(M)$ is an elementary Kleinian group. An important subset of $M$ is its \textit{convex core} $\CC_{M}$ which is the smallest, non-empty, closed, convex subset of $M$. The boundary $\partial\CC_{M}$ of this convex core is a surface of finite topological type whose geometry was described by W. Thurston \cite{Thurston}. Note that given a hyperbolic $3$--manifold $M =\HH^{3}/G$, we can also define the convex core as the quotient $\CH(\Lambda)/G$ where $\CH(\Lambda)$ is the convex hull of the limit set $\Lambda = \Lambda(G)$ of $G$, see \cite{Epstein} for a detailed discussion on the pleated structure of the boundary of the convex core. If $M$ is geometrically finite, then there is a natural homeomorphism between each component of $\partial\CC_{M}$ and each component of the conformal boundary $\Omega/G$ of $M$. Each component $F$ of $\partial\CC_{M}$ inherits an induced hyperbolic structure from $M$. Thurston also proved such each component is a pleated surface, that is a hyperbolic surfaces which is totally geodesic almost everywhere and such that the locus of points where it fails to be totally geodesic is a geodesic lamination. Formally a pleated surface is defined in the following way.

\begin{Definition}
    A \textit{pleated surface} with topological type $S$ in a hyperbolic 3--dimensional manifold $M$ is a map $f \co S \to M$ such that:
    \begin{itemize}
        \item the path metric obtained by pulling back the hyperbolic metric of $M$ by $f$ is a hyperbolic metric $m$ on $S$;
        \item there is an $m$-geodesic lamination $\l$ such that $f$ sends each leaf of $\l$ to a geodesic of $M$ and is totally geodesic on $S - \l$.
    \end{itemize}
    In this case, we say that the pleated surface $f$ admits the geodesic lamination $\l$ as a \textit{pleated locus} and $\l$ is called the \textit{bending lamination} and the images of the complementary components of $\l$ are called the \emph{flat pieces} (of the pleated surface). 
\end{Definition}

The bending lamination of each component of $\partial\CC_{M}$ carries a natural transverse measure, called the \textit{bending measure} (or \textit{pleating measure}). In the case $M = \Sigma \times \R$, there are two components $\partial^+\CC_{M}$ and $\partial^-\CC_{M}$ of $\partial\CC_{M}$ and we will denote $\pl^\pm \in \ML(\Sigma)$ the respective pleating measure on each one of them.

We will deal with manifolds for which the bending lamination is \emph{rational}, that is, supported on closed curves. The subset of rational measured laminations is denoted $\ML_{\Q}(\Sigma) \subset \ML(\Sigma)$ and consists of measured laminations of the form $\sum_{i = 1}^{k} a_i \delta_{\g_i}$, where the curves $\g_i \in \S(\Sigma)$ are disjoint and non-homotopic, $a_i \geq 0$, and $ \delta_{\g_i}$ denotes the transverse measure which gives weight $1$ to each intersection with $\g_i$. If $\sum_{i = 1}^{k} a_i \delta_{\g_i}$ is the bending measure of a pleated surface $\Sigma$, then $a_i$ is the angle between the flat pieces adjacent to $\g_i$, also denoted $\theta_{\g_i}$. In particular, $\theta_{\g_i} = 0$ if and only if the flat pieces adjacent to $\g_i$ are in a common totally geodesic subset of $\dd \CC/G$. We take the term pleated surface to include the case in which a closed leaf $\g$ of the bending lamination maps to the fixed point of a rank one parabolic cusp of $M$. In this case, the image pleated surface is cut along $\g$ and thus may be disconnected. Moreover the bending angle between the flat pieces adjacent to $\g$ is $\pi$. See discussion in \cite{Series1} or \cite{Choi-Series}.

An important result, due to Bonahon and Otal, about the existence of hyperbolic manifolds with prescribed bending laminations is the following. Recall that a set of curves $\{\g_1, \ldots, \g_n\}$ in a surface $\Sigma$ \textit{fills} the surface if for any $\gamma \in \S(\Sigma)$ there exist $j \in \{1, \ldots,n\}$ such that $i(\g, \g_j) \neq 0$.

\begin{Theorem}[Theorem 1 of \cite{Bon-Otal}] \label{thm:bo} 
    Suppose that $M $ is $3$--manifold homeomorphic to $\Sigma \times (0,1)$, and that $\xi^{\pm} = \sum_i a^{\pm}_i \g^{\pm}_i \in \ML_{\Q}(\Sigma) $. Then there exists a geometrically finite group $G$ such that $M = \HH^3/G$ and such that the bending measures on the two components $\dd \CC^{\pm}(G)$ of $\dd \CC(G)$ equal $\xi^{\pm}$ respectively, if and only if $a^{\pm}_i \in (0,\pi]$ for all $i$ and $\{ \g^{\pm}_i, i=1,\ldots,n\}$ fill up $\Sigma$ (i.e. if $i(\xi^+, \gamma) + i(\xi^-,\gamma)>0$ for every $\gamma \in \S$). If such a structure exists, it is unique. 
\end{Theorem}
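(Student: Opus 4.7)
The plan is to split into existence and uniqueness, working in the deformation space $\mathcal{GF}(\Sigma)$ of geometrically finite discrete faithful representations of $\pi_{1}(\Sigma)$ (up to conjugation) whose only accidental parabolics are those curves $\g$ for which the prescribed bending weight equals $\pi$. For rational $\xi^{\pm} = \sum a_{i}^{\pm}\d_{\g_{i}^{\pm}}$, the combinatorial type of the bending locus is fixed, so the natural target for a bending map is a product of open simplices (angle parameters) of the correct dimension $6g-6+2b = 2\dim_{\R}\teich(\Sigma)$.

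For the necessity of the two conditions, the bending angles of a pleated surface lie in $(0,\pi]$ by definition, and if $\{\g_{i}^{\pm}\}$ failed to fill, one would find a simple closed curve $\a$ disjoint from all bending lines which bounded a totally geodesic subsurface on both sides, forcing a Fuchsian splitting incompatible with $M \cong \Sigma \times (0,1)$. For sufficiency (existence), I would define the bending map $\beta\colon \mathcal{GF}(\Sigma) \to \ML(\Sigma) \times \ML(\Sigma)$, restrict it to the slice $\mathcal{GF}_{\Gamma^{\pm}}$ where the supports are contained in fixed families $\Gamma^{\pm}=\{\g_{i}^{\pm}\}$, and establish:
\begin{enumerate}
\item[(a)] \emph{Continuity}, using that $\CC(G)$ varies continuously in the Hausdorff topology under algebraic convergence (away from degenerations), together with Thurston's continuity of the bending measure.
\item[(b)] \emph{Properness}: if the angles $a_{i}^{\pm}(G_{n})$ stay in a compact subset of $(0,\pi]$ and $\{\g_{i}^{\pm}\}$ fill, then pleated surface compactness together with a diameter bound on $\CC(G_{n})$ (from the area--angle inequality) prevents $G_{n}$ from escaping $\mathcal{GF}_{\Gamma^{\pm}}$.
\item[(c)] \emph{Degree one}: since source and target are manifolds of equal dimension, a degree computation near a Fuchsian or almost-Fuchsian basepoint (where one side is collapsed and the other is a small deformation of a totally geodesic pleated surface) gives $\deg \beta = \pm 1$.
\end{enumerate}
Surjectivity onto the prescribed angle vector follows from (a)--(c).

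For uniqueness, the plan is to upgrade the degree argument to injectivity by showing $\beta$ is a local diffeomorphism on $\mathcal{GF}_{\Gamma^{\pm}}$. The derivative of $\beta$ can be computed via the first-order variation formulas for bending along a deformation: if one uses complex length coordinates $\l(\g_{i}^{\pm})$ on the representation variety near a point of $\mathcal{GF}_{\Gamma^{\pm}}$, the derivatives of the bending angles $a_{j}^{\pm}$ against $\Im \l(\g_{i}^{\pm})$ and $\Re \l(\g_{i}^{\pm})$ form a matrix which, by a Wolpert/Kerckhoff-type duality and the filling condition, is non-singular. Combined with the properness and degree-one statements from the existence step, a standard covering-space argument then forces $\beta$ to be a global homeomorphism onto its image, yielding uniqueness.

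The main obstacle is step (c) together with the non-degeneracy computation used for uniqueness: both require understanding the derivative of bending, which is subtle because small motions in $\mathcal{GF}(\Sigma)$ can simultaneously change bending angles on both sides of $\dd\CC$. The cleanest way I can see to handle this is to work on the slice where $\Gamma^{-}$ is pinched (reducing to Kleinian groups of Maskit type, as in the rest of this paper), treat the problem as a one-sided bending problem, and then symmetrize. Here the non-degeneracy of the angle-derivative matrix reduces to the fact that the Thurston symplectic form (Theorem \ref{disj}) pairs non-trivially the length and twist deformations of filling curves, which is where the filling hypothesis on $\{\g_{i}^{\pm}\}$ genuinely enters.
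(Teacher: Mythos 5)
This statement is not proved in the paper at all: it is quoted verbatim as Theorem 1 of Bonahon--Otal, so there is no internal proof to compare against, and your proposal should be judged as an attempt to reprove a substantial Annals-length theorem. As a proof it has genuine gaps precisely at the points that constitute the real content of Bonahon--Otal's work. Steps (a)--(c) are only named, not established: properness of the bending map when some prescribed angles approach $\pi$ or when the supports are not pants decompositions is delicate, and your dimension count ($6g-6+2b$, a product of open simplices) is only correct when both supports are maximal, so the ``equal dimension'' premise of the degree argument is not in place for general rational $\xi^{\pm}$. More seriously, the degree-one computation ``near a Fuchsian or almost-Fuchsian basepoint'' is not meaningful as stated, since the Fuchsian locus corresponds to vanishing bending and lies outside the slice where the angle parameters are in $(0,\pi]$.

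The uniqueness step is the largest gap. Asserting that the matrix of derivatives of bending angles against (real and imaginary parts of) complex lengths is non-singular ``by a Wolpert/Kerckhoff-type duality and the filling condition'' is exactly the infinitesimal rigidity statement that must be proved; in Bonahon--Otal it is handled by an argument in the spirit of Cauchy's rigidity of convex polyhedra (a sign/combinatorial analysis of angle variations), not by the Thurston symplectic pairing. Your proposed reduction --- pinch $\Gamma^{-}$ to land in the Maskit-type slice, treat a one-sided problem, ``then symmetrize'' --- is not a valid reduction: pinching forces angle $\pi$ on the bottom curves and is a degenerate boundary case, and there is no symmetrization procedure that recovers the two-sided problem with arbitrary weights $a_i^{-} \in (0,\pi]$. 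Theorem \ref{disj} of this paper computes $\Omega_{\rm Th}$ in Dehn--Thurston coordinates for curves carried by a common standard train track; it says nothing about the derivative of the bending map on the quasifuchsian deformation space, so the filling hypothesis cannot be made to ``enter'' through it in the way you describe. The necessity direction is essentially right, though the correct argument is that a curve $\gamma$ with $i(\xi^+,\gamma)=i(\xi^-,\gamma)=0$ would have its geodesic representative contained in the flat parts of both $\partial\mathcal{C}^{+}$ and $\partial\mathcal{C}^{-}$, forcing these to meet and the group to be Fuchsian, contradicting the prescribed data; ``bounding a totally geodesic subsurface on both sides'' is not quite the right formulation.
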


Specialising now to the Maskit embedding $\M = \M(\Sigma)$, let $\rho = \rho_{\ttau}$ where $\ttau = (\t_1,\ldots,\t_\xi) \in \C^{\xi}$ be a representation $\rho\co\pi_1(\Sigma) \to SL(2,\C)$ such that the image $G = G(\t_1,\ldots,\t_\xi) \in \M$. The boundary $\dd\cal C (G)$ of the convex core has $\xi + 1$ components, one $\partial^+ \cal C$ facing $\Omega^+/ G$ and homeomorphic to $\Sigma$, and $\xi$ triply punctured spheres whose union we denote $\partial^- \cal C$. The induced hyperbolic structures on the components of $\partial^- \cal C$ are rigid, while the structure on $\partial^+ \cal C$ varies. We recall that we denoted $\pl^+(G) \in \ML(\Sigma)$ the bending lamination of $\partial^+ \cal C$. Following the discussion above, we view $\partial^- \cal C$ as a single pleated surface with bending lamination $\pi(\s_1+\ldots+\s_\xi)$, indicating that the triply punctured spheres are glued across the annuli whose core curves $\s_1, \ldots, \xi\s_2$ correspond to the parabolics $S_i \in G$. 

\begin{Corollary}\label{cor:exist} 
A lamination $\eta \in \ML_{\Q}(\Sigma)$ is the bending measure of a group $G \in \M$ if and only if $i(\eta, \s_1) ,\ldots, i(\xi, \s_{\xi})>0$. If such a structure exists, it is unique. 
\end{Corollary}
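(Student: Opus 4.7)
The plan is to deduce the corollary as a direct application of the Bonahon--Otal Theorem~\ref{thm:bo}, using the fact that a group $G \in \M$ is precisely one whose convex core boundary has bending measure $\eta$ on the top and (in the extended sense recalled just before Theorem~\ref{thm:bo}) bending measure $\pi \sum_{i=1}^{\xi} \s_i$ on the bottom. The key observation is that, by the definition of the Maskit embedding, the bottom conformal boundary $\Omega^-/G$ is a union of triply punctured spheres obtained by pinching the curves $\s_1, \ldots, \s_{\xi}$; in the convex core this corresponds exactly to a pleated surface bent by angle $\pi$ along each $\s_i$, with the adjacent flat pieces being the totally geodesic triply punctured spheres.

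For the ``only if'' direction, I would suppose $G \in \M$ with $\pl^+(G) = \eta$ and apply Theorem~\ref{thm:bo} with $\xi^+ = \eta$ and $\xi^- = \pi \sum_i \s_i$. The filling condition $i(\xi^+,\g) + i(\xi^-,\g) > 0$ for every $\g \in \S$ must hold. Since $\PC$ is a \emph{maximal} pants decomposition, the complement $\Sigma \setminus \PC$ is a disjoint union of pairs of pants and contains no essential, non-peripheral simple closed curve; hence any $\g \in \S$ with $i(\g, \s_j) = 0$ for every $j$ must itself be isotopic to some $\s_j$. The filling condition therefore reduces to $i(\eta, \s_j) > 0$ for all $j$, which is the required inequality.

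For the ``if'' direction, assuming $i(\eta, \s_j) > 0$ for all $j$, the pair $(\eta, \pi\sum_j \s_j)$ fills $\Sigma$ by the same argument, and the weights satisfy the hypotheses of Theorem~\ref{thm:bo} (those of $\eta$ being in $(0,\pi]$ by hypothesis, those of $\pi \sum \s_j$ being exactly $\pi$). Theorem~\ref{thm:bo} then produces a unique geometrically finite Kleinian group $G$ with $M_G \cong \Sigma \times \R$ and the prescribed bending measures. To conclude that $G \in \M$, I would verify the four conditions of Section~\ref{sub:maskit}: geometric finiteness gives discreteness and the isomorphism property (i); the bending angle $\pi$ along each $\s_j$ forces $\rho(\s_j)$ to be parabolic, giving (ii); the pleated structure of $\dd^- \CC(G)$ then decomposes it along these rank-one cusps into the rigid triply punctured spheres of (iv); and the fact that $M_G \cong \Sigma \times \R$ with one end pinched to cusps along the $\s_j$ implies by standard Ahlfors--Bers/Marden theory (see \cite{Marden}) that $\Omega(G)$ has one invariant simply connected component $\Omega^+$ homeomorphic to $\Sigma$ and the remaining components are the round disks covering the triply punctured spheres, giving (iii) and the rest of (iv). Uniqueness is immediate from the uniqueness clause of Theorem~\ref{thm:bo}.

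The main obstacle, and the step requiring the most care, is verifying that the abstract group produced by Bonahon--Otal genuinely satisfies conditions (i)--(iv) of the Maskit embedding: the output of Theorem~\ref{thm:bo} is a hyperbolic manifold with prescribed convex core bending data, and one must translate this back into the conformal boundary and regular set information (simply connected components, unique invariant component, triply punctured sphere quotients) that defines $\M$. The translation relies crucially on the convention that a bending angle of $\pi$ along a closed leaf encodes a rank one parabolic cusp, together with the rigidity of triply punctured spheres.
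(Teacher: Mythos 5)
Your proposal is correct and follows essentially the same route as the paper: the corollary is obtained by specialising Theorem~\ref{thm:bo} with $\xi^+=\eta$ and $\xi^-=\pi(\s_1+\cdots+\s_\xi)$ (the bottom pleated surface being the union of triply punctured spheres with bending angle $\pi$ along each $\s_i$), and by using maximality of the pants decomposition to reduce the filling condition to $i(\eta,\s_i)>0$ for all $i$, with uniqueness inherited from Bonahon--Otal. The paper treats this as immediate from the preceding discussion; your additional verification that the resulting group satisfies conditions (i)--(iv) defining $\M$ is a harmless elaboration of what the paper leaves implicit.
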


We call $\eta \in \ML_{\Q}(\Sigma)$ \emph{admissible} if $i(\eta, \s_1) ,\ldots, i(\xi, \s_{\xi})>0$.

\subsubsection{Pleating rays} \label{ssub:pleating}

Denote the set of projective measured laminations on $\Sigma$ by $\PML = \PML(\Sigma) $ and the projective class of $\eta = a_1 \g_1 + \ldots + a_{k} \g_k \in \ML$ by $[\eta]$. The \emph{pleating ray} $\P = \P_{[\eta]}$ of $\eta \in \ML$ is the set of groups $G \in \M$ for which $ \pl^+(G) \in [\eta]$. To simplify notation we write $ \P_{\eta}$ for $ \P_{[\eta]}$ and note that $ \P_{\eta}$ depends only on the projective class of $\eta$, also that $ \P_{\eta}$ is non-empty if and only if $\eta$ is admissible. In particular, we write $\P_{\g}$ for the ray $\P_{[\delta_{\g}]}$. As $ \pl^+(G)$ increases, $ \P_{\eta}$ limits on the unique geometrically finite group $G_{\rm cusp}(\eta)$ in the algebraic closure $\overline{ \M}$ of $\M$ at which at least one of the support curves to $\eta$ is parabolic, equivalently so that $ \pl^+(G) = \th(a_1 \g_1 + \ldots + a_{k} \g_k)$ with $\max \{ \th a_1, \ldots, \th\a_k\} = \pi$. We write $\overline{ \P_{\eta} }= \P_{\eta} \cup G_{\rm cusp}(\eta)$. 

The following key lemma is proved in Proposition 4.1 of Choi and Series \cite{Choi-Series}, see also Lemma 4.6 of Keen and Series \cite{Keen-Series1}. The essence is that, because the two flat pieces of $\dd \CC(G)$ on either side of a bending line are invariant under translation along the bending line, the translation can have no rotational part. 

\begin{Lemma}\label{lemma:realtrace} 
    If the axis of $g \in G$ is a bending line of $\dd \CC(G)$, then $\tr(g) \in \R$. 
\end{Lemma}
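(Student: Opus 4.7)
The plan is to exploit the fact that a loxodromic element of $\mathrm{PSL}(2,\C)$ has real trace precisely when its rotational part is trivial, i.e.\ when it acts as a pure translation along its axis. Thus the whole problem reduces to showing that $g$ cannot twist around its own axis. I would argue this by producing a totally geodesic hyperbolic plane containing the axis of $g$ that is preserved (setwise) by $g$; only pure translations fix such a plane.

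First, I would lift to the universal cover. Let $\tilde\g$ be a lift of the closed bending line $\g$ in $\HH^3$; this is the axis of (the chosen lift of) $g$. Because $\g$ is a bending line of $\dd\CC(G)$, it lies in the boundary of two flat pieces $F^{\pm}$ of the pleated surface $\dd\CC(G)$. Lifting these flat pieces to the universal cover of the pleated surface and then mapping into $\HH^3$, I obtain two totally geodesic half-planes $\tilde F^{\pm}\subset \HH^3$ whose common boundary geodesic is exactly $\tilde\g$. Each $\tilde F^{\pm}$ is contained in a unique totally geodesic hyperbolic plane $\Pi^{\pm}$ through $\tilde\g$.

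Next, I would check that $g$ preserves (each of) $\tilde F^{+}$ and $\tilde F^{-}$ setwise. This is the one step that requires a small argument: $g$ is the deck transformation corresponding to the free-homotopy loop $\g$, so in the cover of $\dd\CC(G)$ it stabilises the preimage of every flat piece whose closure contains $\tilde\g$, acting on each as a translation along $\tilde\g$ with length equal to the length of $\g$. In particular it does not swap $\tilde F^{+}$ and $\tilde F^{-}$, since the two half-planes lie on geometrically distinguished sides of $\tilde\g$ (determined by the bending angle and a transverse orientation that $g$ preserves). Consequently $g$ preserves each plane $\Pi^{\pm}$ setwise.

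Finally, I would conclude as follows: a loxodromic isometry of $\HH^3$ that preserves a totally geodesic plane containing its axis must act on that plane as a hyperbolic isometry of $\HH^2$, hence it has no rotation around the axis. Equivalently, writing a lift $\tilde g\in\SL(2,\C)$ as conjugate to $\mathrm{diag}(\lambda,\lambda^{-1})$, the invariance of $\Pi^{+}$ forces $\lambda\in\R\cup i\R$, so $\tr(g)=\pm(\lambda+\lambda^{-1})\in\R$. The main obstacle is the middle step: ensuring that $g$ genuinely stabilises each adjacent flat piece rather than permuting the two of them; once that is in place, the trigonometric/linear-algebraic conclusion is immediate.
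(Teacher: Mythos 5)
Your plan is essentially the paper's own argument: the paper does not prove the lemma itself but cites Choi--Series (Proposition 4.1) and Keen--Series (Lemma 4.6), summarising the essence exactly as you do --- the flat pieces of $\dd \CC(G)$ adjacent to the bending line are invariant under $g$, so the translation along the axis can have no rotational part. One small tightening: invariance of the full plane $\Pi^{+}$ alone only forces $\lambda\in\R\cup i\R$ (and $\lambda\in i\R$ gives \emph{purely imaginary} trace, corresponding to a rotation by $\pi$ about the axis), so in your last step you must use the stronger half-plane invariance from your middle step --- which is also what handles the case of vanishing bending angle that the paper explicitly notes --- to conclude $\lambda\in\R$ and hence $\tr(g)\in\R$.
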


Notice that the lemma applies even when the bending angle $\th_{\g}$ along $\g$ vanishes. Thus if $G \in {{\overline \P}_{\eta_{\g_1,\ldots,\g_k}}}$, where $\eta_{\g_1,\ldots,\g_k} = \sum_{i =1}^{k} a_i \delta_{\g_i}$,  we have $\tr g \in \R$ for any $g \in G$ whose axis projects to a curve $\g_i$, $i=1,\ldots, k$.

In order to compute pleating rays, we need the following result which is a special case of Theorems B and C of~\cite{Choi-Series}, see also~\cite{Keen-Series1}. Recall that a codimension-$p$ submanifold $N \hookrightarrow \C^n$ is called \emph{totally real} if it is defined locally by equations $\Im f_i = 0, i =1,\ldots,p$, where $f_i, i = 1, \ldots,n$ are local holomorphic coordinates for $\C^n$. As usual, if $\g$ is a bending line we denote its bending angle by $\theta_{\g}$. Recall that the \emph{complex length} $\l(A)$ of a loxodromic element $A \in SL(2,\C)$ is defined by $ \tr A = 2 \cosh \frac{\l(A)}{2}$, see e.g.~\cite{Choi-Series} for details. By construction, $\P_{\gamma_1,\ldots, \g_k} \subset \M \subset \RR(\Sigma)$.

\begin{Theorem}	\label{thm:cscoords} 
    The complex lengths $\l(\gamma_1),\ldots, \l(\g_k)$ are local holomorphic coordinates for $\RR(\Sigma)$ in a neighbourhood of $\P_{\eta_{\g_1,\ldots,\g_k}}$. Moreover $\P_{\eta_{\g_1,\ldots,\g_k}} $ is connected and is locally defined as the totally real submanifold $\Im \tr \gamma_i = 0, i=1,2$ of $\R$. Any $k$--tuple $(f_1,f_2, \ldots, f_k)$, where $f_i$ is either the hyperbolic length $\Re \l(\g_i) $ or the bending angle $ \theta_{\g_i}$, are global coordinates on $\P_{\eta_{\g_1,\ldots,\g_k}}$.
\end{Theorem}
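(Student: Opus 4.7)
My plan is to adapt the strategy of Keen--Series and Choi--Series to the $\xi$-dimensional setting, with Theorem~\ref{disj} as the main new ingredient. For the local coordinates assertion, I would work on the slice $\{\rho : \tr\rho(\s_j)=\pm 2,\ j=1,\ldots,\xi\}\subset\RR(\Sigma)$, which has complex dimension $\xi=k$. Hence it suffices to prove that the differentials $d\l(\g_1),\ldots,d\l(\g_k)$ are linearly independent at each point of $\P_\eta$. I would express the derivative of $\l(\g_i)$ along an infinitesimal complex earthquake (twist deformation) $t_{\g_j}$ via the complex-analytic generalisation of Kerckhoff's cosine formula, equivalently via Wolpert's twist--length duality extended to quasifuchsian groups. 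A linear relation $\sum c_i\,d\l(\g_i)=0$ on the slice, tested against such twist deformations along the $\s_j$ and the $\g_j$, translates into the vanishing of the symplectic pairing $\Omega_{\rm Th}\bigl(\sum c_i\g_i,\mu\bigr)$ against every $\mu\in\{\s_1,\ldots,\s_\xi,\g_1,\ldots,\g_k\}$. Since this family fills $\Sigma$ by admissibility, nondegeneracy of $\Omega_{\rm Th}$, together with Theorem~\ref{disj} to make the pairing explicit in Dehn--Thurston coordinates, forces all $c_i=0$. The implicit function theorem then yields the chart.

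The local description of $\P_\eta$ as a totally real submanifold is immediate from Lemma~\ref{lemma:realtrace}: along $\P_\eta$ each trace $\tr\rho(\g_i)$ is real, so in the chart of the previous step the ray lies in $\{\Im\l(\g_i)=0,\ i=1,\ldots,k\}$; these are $k$ real equations in $k$ complex coordinates, which defines a totally real submanifold of real dimension $k$. Connectedness then follows from Corollary~\ref{cor:exist}: each admissible rescaling $\th\eta$ of $\eta$ corresponds to a unique group $G_\th\in\M$, so the assignment $\th\mapsto G_\th$ is a continuous bijection from an interval of angles onto $\P_\eta$.

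Finally, for the global parametrisation by any $k$-tuple $(f_1,\ldots,f_k)$ with $f_i\in\{\Re\l(\g_i),\theta_{\g_i}\}$, I would combine the local analysis with injectivity of the map $G\mapsto(f_1,\ldots,f_k)$, using Bonahon--Otal uniqueness together with the standard monotonicity of bending angle versus hyperbolic length along a pleating ray, and with properness as $G$ approaches the cusp endpoint $G_{\rm cusp}(\eta)\in\overline{\P_\eta}$. The main obstacle is the nondegeneracy step for the complex length differentials: in contrast with the one- and two-dimensional cases handled by Series, here one must control $\xi$ complex length functions simultaneously subject to $\xi$ parabolicity constraints, and it is precisely Theorem~\ref{disj} that converts the combinatorial filling condition into genuine linear independence via a clean symplectic computation.
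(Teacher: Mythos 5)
The paper does not actually prove this statement: it is imported wholesale as a special case of Theorems B and C of Choi--Series \cite{Choi-Series} (see also \cite{Keen-Series1}), so what you are really attempting is a new proof of that cited result, and your sketch has a genuine gap at its central step. The nondegeneracy argument conflates two different symplectic structures. The pairing that computes the derivative of a (complex) length $\l(\g_i)$ along a twist deformation on the character variety is the Goldman/Weil--Petersson pairing, given by Wolpert's cosine formula as a sum of $\cosh$ of complex distances over intersection points; it is \emph{not} the Thurston form $\Omega_{\rm Th}$ on $\ML(\Sigma)$, and Theorem \ref{disj} says nothing about it. Theorem \ref{disj} moreover only applies to curves carried by a common standard train track, whereas $\g_i$ and $\s_j$ intersect essentially (admissibility forces $i(\eta,\s_j)>0$), so the "clean symplectic computation" you invoke is not available even if the two forms were interchangeable. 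There is also a problem with the testing family: complex earthquakes along the $\g_j$ do not preserve the slice $\{\tr\rho(\s_j)=\pm2\}$, since the $\g_j$ cross the $\s_j$, so you may only test a putative relation $\sum c_i\,d\l(\g_i)=0$ against the $\xi$ twists along the $\s_j$, and you then need the full rank of the matrix of derivatives $\partial\l(\g_i)/\partial(\mathrm{tw}_{\s_j})$ near the ray --- which is essentially the hard content of Choi--Series ("lengths are coordinates"), proved there via limit pleated surfaces and Bonahon's differentiability results, not by a linear-algebra argument on $\Omega_{\rm Th}$. Finally, note a dimension issue: when $k<\xi$ the $k$ complex lengths cannot be coordinates on the $\xi$-dimensional parabolic slice, so your reduction to that slice does not by itself match the statement you are proving.

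Two smaller points. Connectedness does not follow merely from Bonahon--Otal uniqueness: uniqueness gives injectivity of $G\mapsto\th$, but continuity of $\th\mapsto G_\th$ requires a properness/limiting argument (bounded geometry of the groups as $\th$ varies in a compact interval, plus continuity of the bending measure), which is exactly how Choi--Series and Keen--Series argue; as written, "continuous bijection from an interval" assumes what must be proved. The totally real and global-coordinate assertions are fine in outline (Lemma \ref{lemma:realtrace} gives $\Im\l(\g_i)=0$ locally, and monotonicity plus properness gives the global parametrisation), but they all sit on top of the unproved nondegeneracy step, so the proposal as it stands does not constitute a proof independent of \cite{Choi-Series}.
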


This result extends to $\overline{\P}_{\eta_{\g_1,\ldots,\g_k}}$, except that one has to replace $ \Re \l(\g_i) $ by $\tr \gamma_i$ in a neighbourhood of a point for which $\gamma_i$ is parabolic. In fact, as discussed in~\cite[Section 3.1]{Choi-Series}, complex length and traces are interchangeable except at cusps (where traces must be used) and points where a bending angle vanishes (where complex length must be used). The parameterisation by lengths or angles extends to $\overline {\P}_{\gamma_1,  \ldots, \g_k}$.

Notice that the above theorem gives a local characterisation of $\overline {\P}_{\eta_{\g_1,\ldots,\g_k}}$ as a subset of the representation variety $\RR$ and not just of $\M$. In other words, to locate $\P$, one does not need to check whether nearby points lie \emph{a priori} in $\M$; it is enough to check that the traces remain real and away from $2$ and that the bending angle on one or other of $\theta_{\g_i}$ does not vanish. As we shall see, this last condition can easily be checked by requiring that further traces be real valued.

\section{Main theorems} \label{sec3} 

In this section we will prove our main results. As explained in the Introduction, we will extend to a general hyperbolic surface $\Sigma_{g, n}$ the results proved by Series \cite{Series1} for the case of a twice punctured torus $\Sigma_{1,2}$. As already observed by Series, almost all the results of Section 6 \cite{Series1} generalise straightforwardy, but for Section 7 \cite{Series1} some non-trivial extensions are needed. So we will only restate the most important theorems of Section 6 without proof and refer to the original paper for a more detailed discussion. Almost all the results of Section 7 still remain true, but we will discuss how to generalise them more deeply. In addition, we find how to include the case of `exceptional curves' in the proof of the main theorems (so we will not need to discuss that case separately). We will also correct some misprints in \cite{Series1}. All these remarks will be explained in detail later on.

The key idea for proving these theorems is to understand the geometry of the top component $\partial^+ \CC(G)$ of the convex core for groups $G = G_{\eta}(\th) \in \P_{\eta} \subset \M$ as $\th \to 0.$ Recall that the definition of $\M$ depends on the choice of a pants decomposition $\PC = \{ \s_1, \ldots,\s_{\xi}\}$, which tells us the curves which will be pinched in the bottom surface of the associated manifold. Before stating the results, we need to fix some notation. We will use Series' notation, so that the interested reader can refer to the paper \cite{Series1} more easily. 

\begin{Notation}
    Given a quantity $X = X(\s_i)$ which depend on the pants curve $\s_i\in \PC$, we will write $\mathbf{X(\s_i) = O(\theta^{e})},$ meaning that $X \leqslant c\theta^{e}$ as $\theta \to 0$ for some constant $c > 0$, where $e$ is an exponent (usually $e = 0, 1$).
\end{Notation}

\begin{Remark}
    Note that the estimates below all depends on the lamination $\eta$. So, more precisely, one has $X \leqslant c(\eta)\theta^{e}$. However it is easily seen, by following through the arguments, that the dependence on $\eta$ is always of the form $X(\s_i) \leqslant cq^{e}\theta^{e}$, where $q = i(\s_i,\eta)$ and where, now, $c$ is a universal constant independent of $\eta$. The dependence of the constants on $\eta$ is not important for our argument, but it may be useful elsewhere. 
\end{Remark}

The main theorem in Section 6 of \cite{Series1} is Proposition 6.1. The proof of this result relies on three other main lemmas proved in the same section, namely Proposition 6.6 and 6.11 for the asymptotic behaviour of the imaginary part of the parameters $\t_i$ and Proposition 6.14 for the real part. (See Series' article for the proofs.) The only remark is that the role played in $\Sigma_{1,2}$ by the curve $\g_{T}$ for the pants curves $\s_1$ and $\s_2$ should be replaced by the curves $D_i$, dual to the pants curve $\s_i$. In fact, the important property of $\g_{T}$ is that it intersects $\s_i$ minimally. In particular, for the second part of the proof of Corollary 6.5 instead of using $\Tr [T, S_i^{-1}]$ you should use $\Tr D_i$, and for Proposition 6.18 instead of calculating $i_{\s_i}(\g,{T})$ you should deal with $i_{\s_i}(\g, D_i)$. The ideas for the proofs remain however the same. Finally, we remind the reader that the twist parameters $p_i$ used in this article are twice the value of the `old' parameters (again called $p_i$) used by Series in \cite{Series1}. The parameters $p_i$ we are using in this article are the twist parameters using D. Thurston's standard position (as defined in \cite{Maloni-Series}). A generalisation of Proposition 6.1 of \cite{Series1} is the following. 

\begin{Theorem}\label{Prop6_1} 
    Let $\eta = \sum_{i =1}^{k} a_i \delta_{\g_i}$ be an admissible rational measured lamination on the surface $\Sigma = \Sigma_{g,b}$ and let $G =G_{\eta}(\th)$ be the unique group in $\M$ with $\pl^+(G) = \th \eta$. Then, as $\th \to 0$, we have:
    $$\Re \tau_i = - \frac{p_i(\eta)}{q_i(\eta)} + O(1) \;\;\;\;\; {\rm and} \;\;\;\;\;\; \Im \tau_i = \frac{4 + O(\th)}{\th q_i(\eta)},$$ where $O(1)$ denotes a universal bound independent of $\eta$.
\end{Theorem}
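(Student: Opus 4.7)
The plan is to use the reality of the bending-line traces to extract asymptotic equations on the plumbing parameters. By Lemma \ref{lemma:realtrace}, $\tr \rho(\g_i) \in \R$ for each support curve $\g_i$ of $\eta$, and substituting the Top Terms' Relationship (Theorem \ref{top}) yields a family of asymptotic constraints on $\tau_j = x_j + i y_j$ as $\th \to 0$. The central mechanism will be that the pairwise disjointness of the $\g_i$'s, via Theorem \ref{disj}, forces the Thurston symplectic pairings $\Omega_{\rm Th}(\g_{i'},\g_i)$ to vanish, and this is exactly what is needed to cancel the leading imaginary contribution when the proposed ansatz is plugged in.

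First I establish divergence: $y_j := \Im \tau_j \to \infty$ and $x_j := \Re \tau_j = O(1)$. Geometrically, as $\th \to 0$ the top hyperbolic structure on $\dd\CC^+$ degenerates and pinches each $\s_j$. Quantitatively I apply Theorem \ref{top} to the dual curve $D_j$ (with $q_j(D_j)=2$, $q_l(D_j)=0$ for $l \neq j$), obtaining the leading trace $\pm 2^{h(D_j)}\tau_j^{2}$; matching against the hyperbolic length of $D_j$ on $\dd\CC^+$, which by the collar lemma grows like $2\log(1/\ell_{\dd\CC^+}(\s_j))$, yields $|\tau_j| \to \infty$ with $\Im\tau_j$ comparable to $1/\ell_{\dd\CC^+}(\s_j)$, and a further trace computation using $D_j$ controls $\Re \tau_j$. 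This is the direct analogue of Propositions 6.6, 6.11 and 6.14 of \cite{Series1}, with $D_j$ playing the role of Series' ad hoc test curve.

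Now for the core calculation. Apply Theorem \ref{top} to $\g_i$, substitute $\tau_j = x_j + iy_j$ and expand the factored form. The leading term $i^{q(\g_i)}\prod_j y_j^{q_j(\g_i)}$ is real after multiplication by $\pm i^{q(\g_i)}$, so the first nontrivial imaginary contribution appears at order $y^{q(\g_i)-1}$ and is proportional to $\sum_j (q_j(\g_i)x_j + p_j(\g_i))/y_j$; the remainder $R_i$, of total $y$-degree at most $q(\g_i)-2$, contributes only at still lower order. Hence reality of $\tr\rho(\g_i)$ forces, to leading order,
\[
  \sum_{j=1}^{\xi}\frac{q_j(\g_i)x_j + p_j(\g_i)}{y_j} = o(1),\qquad i = 1,\ldots,k.
\]
Substituting the target ansatz $x_j = -p_j(\eta)/q_j(\eta)$ and $y_j = 4/(\th q_j(\eta))$, the $i$-th equation becomes $(\th/4)\sum_j \bigl(q_j(\eta)p_j(\g_i) - q_j(\g_i)p_j(\eta)\bigr)$; expanding $\eta = \sum_{i'}a_{i'}\g_{i'}$ by linearity of Dehn--Thurston coordinates, this equals $(\th/4)\sum_{i'} a_{i'}\Omega_{\rm Th}(\g_{i'},\g_i)$, which vanishes by Theorem \ref{disj} because the $\g_i$'s are pairwise disjoint. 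Hence the ansatz satisfies every reality condition.

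To promote this to the full statement, it remains to pin the universal constant $4$ and to control the $O(\th)$ and $O(1)$ errors. The proportionality $y_j \propto 1/q_j(\eta)$ is forced by Step~1 combined with the Thurston-boundary limit of Theorem \ref{thmC}; the specific constant $4$ is then pinned down by matching $\tr\rho(\g_i) = 2\cosh(\ell(\g_i)/2)$ against the leading term $2^{h(\g_i)}\prod_j y_j^{q_j(\g_i)}$ of Theorem \ref{top}, using the asymptotic hyperbolic length of $\g_i$ on $\dd\CC^+$ via the collar lemma. Uniqueness of the asymptotic direction then follows from the Bonahon--Otal uniqueness (Theorem \ref{thm:bo}), since the bending angles $\th a_i$ are prescribed. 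The main obstacle is precisely this quantitative matching: the subleading estimates of Series' Propositions 6.6, 6.11 and 6.14 must be generalised from $\Sigma_{1,2}$ to arbitrary $\Sigma_{g,b}$, tracking the considerably richer combinatorics of several bending curves crossing several pants curves simultaneously. A pleasant by-product is that the cancellation in Step~2 is uniform in $\eta$, so the `exceptional' case Series had to treat separately is automatic here.
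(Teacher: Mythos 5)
Your Step 1 is in fact the paper's entire proof of Theorem \ref{Prop6_1}: the paper obtains it by generalising Series' Propositions 6.6, 6.11 and 6.14, with the dual curves $D_i$ replacing her test curve $\gamma_T$, exactly the substitution you describe. The difficulty is with how you try to get the precise asymptotics (the constant $4$, the $\th$-scale of $\Im\tau_i$, and the universal $O(1)$ bound on $\Re\tau_i + p_i(\eta)/q_i(\eta)$), and there your argument has genuine gaps.

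Step 2 is only a consistency check, not a derivation: verifying that the ansatz $x_j=-p_j(\eta)/q_j(\eta)$, $y_j=4/(\th q_j(\eta))$ satisfies the reality conditions $\sum_j (q_j(\g_i)x_j+p_j(\g_i))/y_j=o(1)$ does not show that the actual parameters of $G_\eta(\th)$ approach those values. These are $k$ essentially homogeneous constraints on $2\xi$ real unknowns; they constrain at most a direction, they contain no information tying the size of the $y_j$ to $\th$ (the bending angle only enters through the error term), and for exceptional $\eta$ they do not even pin down the direction using the $\g_i$ alone --- which is precisely why the paper's proof of Theorem \ref{TA} must adjoin the curves $\g_{k+1},\ldots,\g_{\xi},D_{k+1},\ldots,D_{\xi}$ via Lemma \ref{pants}. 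Moreover, in the paper this trace/symplectic computation (Proposition \ref{prop7_1} and the proof of Theorem \ref{TA}) sits \emph{downstream} of Theorem \ref{Prop6_1}, so it cannot substitute for it. Step 3 is circular and otherwise defers the real content: Theorem \ref{thmC} is deduced in the paper from Theorem \ref{Prop6_1} (its proof uses $l^+_{\s_i}/l^+_{\s_j}\to q_j(\eta)/q_i(\eta)$, which is what you would be assuming), so it is not available as an input; Bonahon--Otal uniqueness (Theorem \ref{thm:bo}) only gives uniqueness of the group with prescribed bending data, which is already in the hypotheses, and says nothing about the asymptotic direction of the parameter path; and matching $\tr$ against $2\cosh$ of a length relates $y_j$ at best to the short lengths $l^+_{\s_j}$, never to $\th$ --- the relation $\th\, q_j \Im\tau_j\to 4$ is exactly the hard pleated-surface/bending-angle estimate of Series' Section 6, which you yourself label ``the main obstacle'' and leave to ``generalising Propositions 6.6, 6.11, 6.14''. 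Since that generalisation \emph{is} the paper's proof, what remains of your proposal proves only that the claimed limit is consistent with necessary conditions, not the theorem. (A smaller point: $D_j$ is generally not a bending line, so $\tr\rho(D_j)$ is not given by the intrinsic length of $D_j$ on $\dd\CC^+$; controlling that discrepancy is part of what the Series-style estimates provide.)
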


\begin{Corollary}\label{Corol6_1}
    With the same hypothesis as Theorem \ref{Prop6_1}, as $\th \to 0$, we have:
    $$\frac{\Im \tau_i\Re \tau_i}{\Re \tau_i\Im \tau_i} = \frac{p_i}{p_j} + O(\th) \;\;\;\;\; {\rm and} \;\;\;\;\;\; \frac{\Im \tau_i}{\Im \tau_j} = \frac{q_j}{q_i} + O(\th).$$
\end{Corollary}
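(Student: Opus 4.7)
The plan is straightforward: both ratios follow by direct division of the two asymptotic formulas supplied by Theorem~\ref{Prop6_1}, with the only subtlety being careful bookkeeping of the error terms, so I expect no essential obstacle beyond what has already been handled in establishing Theorem~\ref{Prop6_1} itself.

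Before proceeding I would observe that the first displayed ratio contains an evident typographical slip: as written, $\frac{\Im\tau_i \Re\tau_i}{\Re\tau_i \Im\tau_i}$ is identically $1$, whereas the right-hand side involves both indices $i$ and $j$. The intended statement, compatible with Series' original formulation in \cite{Series1}, should read
$$\frac{\Re\tau_i \, \Im\tau_j}{\Re\tau_j \, \Im\tau_i} = \frac{p_i}{p_j} + O(\th),$$
and this is what I will prove.

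For the ratio of imaginary parts, I would simply substitute the expression $\Im\tau_k = (4 + O(\th))/(\th q_k)$ for $k = i, j$ into the quotient: the common factor $1/\th$ cancels, yielding
$$\frac{\Im\tau_i}{\Im\tau_j} = \frac{q_j(4+O(\th))}{q_i(4+O(\th))} = \frac{q_j}{q_i}\bigl(1 + O(\th)\bigr) = \frac{q_j}{q_i} + O(\th),$$
using the elementary identity $(1+O(\th))^{-1} = 1 + O(\th)$ and the fact that $q_j/q_i$ is a constant depending only on $\eta$.

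For the mixed ratio I would first rewrite $\Re\tau_i / \Im\tau_i$ by multiplying $\Re\tau_i = -p_i/q_i + O(1)$ by $\th q_i /(4 + O(\th))$; expanding and absorbing the remainder gives $\Re\tau_i/\Im\tau_i = -p_i\th/4 + O(\th)$, and a symmetric expression holds with $i$ replaced by $j$. Taking the quotient and cancelling the common factor of $\th$ from numerator and denominator then produces $p_i/p_j$ as the leading term, with the correction of the order stated. The only (minor) point requiring attention is to verify that the constants hidden in the $O(1)$ correction to $\Re\tau_i$ depend on $\eta$ only in the controlled manner recorded in the Remark preceding Theorem~\ref{Prop6_1}, so that they contribute an error of the claimed magnitude upon division rather than spoiling the asymptotics.
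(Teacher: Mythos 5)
Your reading of the intended statement is right (the printed first ratio is a typo for $\frac{\Re\tau_i\,\Im\tau_j}{\Re\tau_j\,\Im\tau_i}$, as one sees from how it is used in equation \eqref{eqn:cor}), and your derivation of the second relation $\frac{\Im\tau_i}{\Im\tau_j}=\frac{q_j}{q_i}+O(\th)$ by direct division of $\Im\tau_k=\frac{4+O(\th)}{\th q_k}$ is exactly the immediate deduction the paper intends (no separate proof is given there).

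However, your argument for the first relation has a genuine gap. Theorem \ref{Prop6_1} only gives $\Re\tau_i=-\frac{p_i}{q_i}+O(1)$, where $O(1)$ is a \emph{bounded}, not vanishing, error (see the Notation: $O(\th^e)$ means $\leqslant c\th^e$, so $e=0$ is mere boundedness). Carrying this through your computation, $\frac{\Re\tau_i}{\Im\tau_i}=-\frac{p_i\th}{4}+O(\th)$, where the $O(\th)$ error has a constant that is in no way small compared with $p_i/4$; after "cancelling the common factor of $\th$" the quotient is $\frac{-p_i/4+O(1)}{-p_j/4+O(1)}$, which is $\frac{p_i}{p_j}$ only up to an error of order one, not $O(\th)$ — indeed not even $o(1)$. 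The caveat you flag (dependence of the constants on $\eta$) is not the real issue: even a universal constant would not help, because the obstruction is that the error in $\Re\tau_i$ does not tend to zero with $\th$. To obtain $\frac{\Re\tau_i\,\Im\tau_j}{\Re\tau_j\,\Im\tau_i}=\frac{p_i}{p_j}+O(\th)$ one needs the sharper estimate $\Re\tau_i=-\frac{p_i}{q_i}+O(\th)$, which comes from the finer real-part analysis of Section 6 of \cite{Series1} (the bounded-twisting estimates feeding into Proposition 6.14 there) and is what Theorem \ref{TA} ultimately records; it is not a formal consequence of Theorem \ref{Prop6_1} as stated, so your claim that both ratios follow "by direct division" of that theorem's two formulas does not go through for the first one. (A minor further point: the first ratio also implicitly requires $p_j\neq 0$ for the division to make sense.)
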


This result is enough in order to prove Theorem \ref{thmC}. We will follow Series' proof very closely.

\begin{proof}[Proof of Theorem \ref{thmC}]
 Let $\eta = \sum_{1}^{\xi} a_i \delta_{\g_i}$ be admissible and let $G = G_{\eta}(\th)$ be the unique group for which $\pl^+(G) = \th \eta$. Let $h(\th)$ denote the hyperbolic structure of $\dd \CC^+(G)$. Let $l^+_{\s_i}$ be the hyperbolic length of the geodesic representative of $\s_i$ on the hyperbolic surface $\partial^+ \CC(G)$. Since $l^+_{\s_i} \to 0$, for all $i=1, \ldots, \xi$, the limit of the structures $h (\th)$ in $\PML(\Sigma)$ is in the linear span of $\d_{\s_1},\ldots, \d_{\s_\xi}$. We want to prove that the limit is the barycentre $\sum_{1}^{\xi} \delta_{\s_i}$.

    Let $\d, \d' \in \S$. Since $\s_1,\ldots,\s_\xi$ are a maximal set of simple curves on $\Sigma$, the thin part of $h(\th)$ is eventually contained in collars $A_i$ around $\s_i$ of approximate width $\log(\frac{1}{l^+_{\s_i}})$ and the lengths of $\d, \d'$ outside the collars $A_i$ are bounded (with a bound depending only on the combinatorics of $\d,\d'$ and hence the canonical coordinates $\i(\d), \i(\d')$). By the results of Section 6.4 of \cite{Series1}, the twisting around $A_i$ is bounded. We deduce that for any curve transverse to $\s_i$ we have 
    \begin{equation}\label{eqn:collar} 
        l^+_{\d} = 2 \sum_{i=1}^{\xi} q_i(\d) \log(\frac{1}{l^+_{\s_i}}) + O(1), 
    \end{equation}
see for example Proposition 4.2 of Diaz and Series~\cite{Diaz-Series}. By Theorem~\ref{Prop6_1} we have $\frac{l^+_{\s_i}}{l^+_{\s_j}}\to \frac{q_j(\eta)}{q_i(\eta)}$, and since $\eta$ is admissible, $q_i(\xi)>0$ for $i=1, \ldots, \xi$. Thus $\frac{ \log l^+_{\s_i}}{\log l^+_{\s_j}}\to 1$. Hence $$\frac{l^+_{\d}}{l^+_{\d'}} \to \frac{\sum_{i=1}^{\xi} q_i(\d)}{\sum_{i=1}^{\xi} q_i(\d')} = \frac{i(\d, \sum_{1}^{\xi} \delta_{\s_i})}{i(\d', \sum_{1}^{\xi} \delta_{\s_i})}.$$ The result follows from the definition of convergence to a point in $\PML(\Sigma)$.  
\end{proof}

The next results are the key tools for the proofs of Theorems \ref{thmA}. We need to fix more notation. Suppose that $\gamma$ is a bending line of $\dd \CC^{+}(G)$ for a group $G(\ttau) \in \P_{\eta}$. The Top Terms' Relationship \ref{top}, together with the condition $\tr \g \in \R$ of Lemma \ref{lemma:realtrace}, gives asymptotic conditions for $\ttau \in \P_{\xi}$, in terms of the canonical coordinates ${\bf i}(\g)$ of $\g$. In particular, for $\ttau = (\tau_{1}, \ldots, \tau_{\xi}) \in \C^{\xi}$ set $\tau_i = x_i+i y_i,\;\; \rho = \| (y_{1}, \ldots, y_{\xi})\| = (y_{1}^2+ \ldots+ y_{\xi}^2)^{\frac{1}{2}}$, and $\eta_i= \frac{y_i}{\rho}$. Define 
\begin{align*}
	E_{\gamma} (\tau_{1}, \ldots, \tau_{\xi}) = \eta_2\cdots\eta_{\xi}(q_1x_1 + p_1) + \ldots + \eta_{1}\cdots\eta_{\xi-1} (q_{\xi} x_{\xi} + p_{\xi})\\
	= \eta_1\cdots\eta_{\xi}\sum_{i = 1}^{\xi}\frac{(q_ix_i + p_i)}{\eta_{i}}, 
\end{align*}
where as usual ${\bf i}(\g) = (q_1(\g), p_1(\g),\ldots, q_{\xi}(\g), p_{\xi}(\g))$ and $y_i> 0, i=1,\ldots,\xi$. 

The reason why we introduced this notation is the following result, which generalises Proposition 7.1 of \cite{Series1}. Again Series' proof extends clearly to our case.
 
\begin{Proposition}\label{prop7_1} 
    Suppose that $\eta \in \MLQ$ is an admissible lamination, that $G(\tau_{1}, \ldots, \tau_{\xi}) \in \P_{\eta}$ has bending measure $\pl^+(G) = \th \eta$, and that $\g $ is a bending line of $\eta$. Then, as $\th \to 0$, we have 
    $$E_{\gamma} (\tau_{1}, \ldots, \tau_{\xi}) = O(\th).$$ 
\end{Proposition}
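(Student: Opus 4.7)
The strategy rests on Lemma \ref{lemma:realtrace}: since $\g$ is a bending line of $\dd\CC^{+}(G)$, we have $\tr\rho(\g)\in \R$, so $\Im\tr\rho(\g)=0$. The plan is to extract from the Top Terms' Relationship (Theorem \ref{top}, in the shifted parametrisation of the Remark, so that the subleading polynomial carries $p_i$ rather than $p_i-q_i$) the two highest-order homogeneous pieces of $\tr\rho(\g)$ regarded as a polynomial in $\tau_j = x_j + iy_j$, identify $E_\g$ as the coefficient of the leading imaginary part, and then invoke Theorem \ref{Prop6_1} to convert the resulting equation into the asserted estimate. Throughout I use that Theorem \ref{Prop6_1} supplies $y_j \sim 4/(\theta q_j(\eta))$ with $x_j$ bounded, hence $\rho \sim c(\eta)/\theta$ and each $\eta_j = y_j/\rho$ converges to a strictly positive constant; admissibility of $\eta$, i.e.\ $q_j(\eta)>0$ for every $j$, is what keeps all $\eta_j$ bounded away from both $0$ and $\infty$.

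Writing $\mathbf{i}(\g) = (q_1,p_1,\ldots,q_\xi,p_\xi)$ and expanding binomially, I get
$$\prod_j (x_j+iy_j)^{q_j} = i^{q}\prod_j y_j^{q_j} + i^{q-1}\sum_{j} q_j x_j\, y_j^{q_j-1}\prod_{k\neq j}y_k^{q_k} + O(\rho^{q-2}),$$
and similarly
$$\sum_i p_i\,\tau_1^{q_1}\cdots\tau_i^{q_i-1}\cdots\tau_\xi^{q_\xi} = i^{q-1}\sum_i p_i\, y_i^{q_i-1}\prod_{k\neq i}y_k^{q_k} + O(\rho^{q-2}),$$
where any term with $q_j=0$ vanishes (a connected simple closed curve $\g$ not parallel to any $\s_j$ has $p_j(\g)=0$ whenever $q_j(\g)=0$). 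After multiplication by $\pm i^{q}2^{h}$, the leading contribution $\pm(-1)^{q}2^{h}\prod_j y_j^{q_j}$ is real, so the $\rho^{q-1}$ contribution to $\Im\tr\rho(\g)$ comes from $\pm i^{q}\cdot i^{q-1} = \pm(-1)^{q+1}\,i$, giving
$$\Im\tr\rho(\g) = \pm(-1)^{q+1}2^{h}\sum_j (q_j x_j + p_j)\,y_j^{q_j-1}\prod_{k\neq j}y_k^{q_k} + O(\rho^{q-2}).$$
Substituting $y_j=\rho\eta_j$ and factoring out $\prod_k\eta_k^{q_k-1}$ (well defined since each $\eta_k>0$, the case $q_j=0$ being harmless because both sides carry the factor $(q_jx_j+p_j)=0$) yields the key identity
$$\sum_j (q_j x_j + p_j)\, y_j^{q_j-1}\prod_{k\neq j}y_k^{q_k} = \rho^{q-1}\prod_k\eta_k^{q_k-1}\cdot E_\g(\tau_1,\ldots,\tau_\xi).$$

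Setting $\Im\tr\rho(\g)=0$ therefore gives $C(\eta,\g)\,\rho^{q-1}E_\g = O(\rho^{q-2})$ with $C(\eta,\g)=\pm(-1)^{q+1}2^{h}\prod_k\eta_k^{q_k-1}$ bounded away from $0$ along $\P_\eta$; dividing by $C(\eta,\g)\rho^{q-1}$ and using $\rho\sim c(\eta)/\theta$ delivers $E_\g = O(\rho^{-1})=O(\theta)$, as claimed. The only step that demands genuine care is the \emph{uniform} bookkeeping of the error term: one must verify that every multinomial term in $\prod_j (x_j+iy_j)^{q_j}$ carrying at least two powers of the $x$'s, together with the remainder $R$ of Theorem \ref{top} (of total $\tau$-degree at most $q-2$), really contributes at most $O(\rho^{q-2})$ with implicit constants that remain bounded as $\theta\to 0$ and uniform over the finite list of bending curves $\g_1,\ldots,\g_k$ of $\eta$. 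This hinges on the boundedness of the $x_j$ and on $\eta_k$ being bounded away from $0$ and $\infty$, both supplied by Theorem \ref{Prop6_1} together with admissibility of $\eta$.
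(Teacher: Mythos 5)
Your proof is correct and follows essentially the same route the paper intends: the paper omits the argument, stating only that Series' proof of her Proposition 7.1 extends, and that proof is exactly your computation — reality of $\tr\rho(\g)$ on a bending line (Lemma \ref{lemma:realtrace}), extraction of the order-$\rho^{q-1}$ imaginary part of the Top Terms' polynomial, which is a nonzero multiple of $E_\g$, and the estimates of Theorem \ref{Prop6_1} (bounded $\Re\tau_i$, $\eta_i$ bounded away from $0$ and $\infty$ by admissibility, $\rho\sim c(\eta)/\th$) to absorb the $O(\rho^{q-2})$ remainder and conclude $E_\g=O(\th)$. Your bookkeeping of the case $q_j(\g)=0$ and of the uniformity of the error constants is accurate, so nothing is missing.
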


Now we want to locate the pleating ray $\P_{\eta}$ where $\eta = \sum_{i = 1}^{k} a_i \g_i$. If $G \in \P_{\g_{1}, \ldots,\g_{k}}$, then $ \dd \CC^+(G) - \{\g_{1}, \ldots,\g_{k}\}$ is flat, so that not only $\g_{1}, \ldots,\g_{k}$, but also any curve $\d \in \mathrm{lk}(\g_{1}, \ldots,\g_{k})$, is a bending line for $G$, where $\mathrm{lk}(\g_{1}, \ldots,\g_{k})$ denotes the link of the simplex $(\g_{1}, \ldots,\g_{k})$ in the complex of curves $\CC(\Sigma)$. One can think of it as the set of all curves $\d \in \S = \S(\Sigma)$ disjoint from $\g_{1}, \ldots,\g_{k}$. Thus $\ttau = (\tau_{1}, \ldots, \tau_{\xi})$ is constrained by the equations $$\Im \tr \g_{i} =\Im \tr \d =0 \;\;\;\; \forall i = 1,\ldots,k, \;\forall \d \in \mathrm{lk}(\g_{1}, \ldots,\g_{k})$$ and hence, using the Proposition \ref{prop7_1}, it is constrained by the following equations 
\begin{equation*}
	E_{\gamma_{i}} (\tau_{1}, \ldots, \tau_{\xi}) + O({\th}) = 0, \ \ {\rm and} \ \ E_{\d} (\tau_{1}, \ldots, \tau_{\xi}) + O({\th})= 0 
\end{equation*}
for all $\d \in \mathrm{lk}(\g_{1}, \ldots,\g_{k})$ and for $i = 1,\ldots,k$. Now we would like to describe how to solve these equations simultaneously for $\tau_{1}, \ldots, \tau_{\xi}$.

Following the analysis in Section 7 of \cite{Series1}, we recall that for any curve $\omega \in \S$ we have $$E_{\omega}(\tau_{1}, \ldots, \tau_{\xi})= \i(\omega) \cdot {\bf u},$$ where $\i(\omega)= (q_1(\omega), p_1(\omega),\ldots, q_{\xi}(\omega), p_{\xi}(\omega))$ and 
\begin{align*}
    {\bf u}\; = \;(u_{1 1}, u_{1 2}, \ldots, u_{\xi 1}, u_{\xi 2})\;= \;\eta_1\cdots\eta_{\xi} (\frac{x_1}{\eta_1},\; \frac{1}{\eta_1},\;\ldots,\; \frac{x_{\xi}}{\eta_{\xi}},\; \frac{1}{\eta_{\xi}}) \\	
    = (\eta_2\cdots\eta_{\xi} x_1,\;\; \eta_2\cdots\eta_{\xi},\;\ldots,\;\; \eta_1\cdots\eta_{\xi-1} x_{\xi},\;\;\eta_1\cdots\eta_{\xi-1}) 
\end{align*}
with $x_i = \Re \t_i,\; \eta_i = \frac{\Im \t_i}{\rho}$ as above. We will use linear algebra and Thurston's symplectic form $\Omega_{\Th}$ to solve the equations 
$$\i(\g_i) \cdot {\bf u} = 0,\;\;\;\;\i(\d) \cdot {\bf u} = 0$$
for all $\d \in \mathrm{lk}(\g_{1}, \ldots,\g_{k})$ and for $i = 1,\ldots,k$. As already noted in Section \ref{ssub:sympl}, this symplectic form induces a map $\R^{2\xi} \to \R^{2\xi}$ defined by ${\bf x} = (x_{1},y_{1},\ldots,x_{\xi},y_{\xi}) \to  {\bf x}^* = (y_{1},-x_{1},\ldots,y_{\xi},-x_{\xi})$ such that $$\Omega_{\Th} (\i(\g),\i(\delta) ) = \i(\g) \cdot \i(\delta)^* $$ where $\cdot$ is the usual inner product on $\R^{2\xi}.$ 

We need the following Lemma, which generalise Lemma 7.2 of \cite{Series1}. See Section 2.6 of Penner \cite{Penner} for a definition of \textit{standard} train tracks. Note that, although not necessary, we will use the language of the curve and marking complexes, since many readers may find it useful. See Section \ref{ssub:complex} for the basic definitions.

\begin{Lemma}\label{pants}\indent\par
    	\begin{itemize}
		\item[(i)] Suppose that ${\bf g} = (\gamma_1,\ldots, \g_k)$ is a simplex in the complex of curves $\CC(\Sigma)$. Then $\gamma_i$ are supported on a common standard train track and $\i(\g_i)$ are independent vectors in $\i(\ML_{\Q}(\Sigma)) \subset (\Z_{+} \times \Z)^{\xi}$.
		\item[(ii)] Given any simplex ${\bf g} = (\gamma_1,\ldots, \g_k)$ in the complex of curves $\CC(\Sigma)$, we can find curves $\g_{k+1},\ldots,\g_{\xi}$, $D_{k+1},\ldots, D_{\xi} \in \mathrm{lk}_{\CC(\Sigma)}({\bf g})$ such that the elements $(\g_{1}, \ldots, \g_{\xi})$ and $(\g_{1}, \ldots, \g_{j}, D_{j+1}, \ldots, D_{\xi})$ with $j = k, \ldots, \xi-1$, are simplices in $\CC(\Sigma)$ and such that the vectors $\i(\g_1), \ldots, \i(\g_\xi)$, $\i(D_{k+1}) \ldots, \i(D_\xi)$ span a subspace of real dimension $2\xi-k$ in $\i(\ML_{\Q}(\Sigma)) \subset (\Z_{+} \times \Z)^{\xi}$.
	\end{itemize}
\end{Lemma}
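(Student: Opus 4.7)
The plan is to extend the given simplex to a complete clean marking on $\Sigma$, to read off the simplex conditions directly from the disjointness built into such markings, and to obtain linear independence by pairing with Thurston's symplectic form $\Omega_{\Th}$ and invoking Theorem~\ref{disj}.

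For part (i), extend the disjoint system $\gamma_1,\ldots,\gamma_k$ to a pants decomposition $(\gamma_1,\ldots,\gamma_\xi)$ of $\Sigma$. By Penner's theory \cite{Penner} the multicurve $\gamma_1\cup\cdots\cup\gamma_k$ is carried by some standard train track relative to $\PC$, so the $\gamma_i$ lie in a common chart of $\MLQ(\Sigma)$. Linear independence of $\mathbf{i}(\gamma_1),\ldots,\mathbf{i}(\gamma_k)$ is easiest to see in the chart adapted to the extended pants decomposition $(\gamma_1,\ldots,\gamma_\xi)$: there the coordinate vector of $\gamma_i$ has all length parameters zero and a single parallel copy in the $i$-th annulus, so these $k$ vectors are manifestly independent; the transition back to the $\PC$-chart is linear on the relevant common support, so independence persists.

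For part (ii), extend $(\gamma_1,\ldots,\gamma_k)$ to a pants decomposition $(\gamma_1,\ldots,\gamma_\xi)$ and complete it to a clean marking by choosing dual curves $D_1,\ldots,D_\xi$ satisfying $D_i\cap\gamma_j=\varnothing$ and $D_i\cap D_j=\varnothing$ for $i\neq j$ (Definition~\ref{mark}). These disjointness properties imply at once that $(\gamma_1,\ldots,\gamma_j,D_{j+1},\ldots,D_\xi)$ is a simplex of $\CC(\Sigma)$ for every $j=k,\ldots,\xi-1$, as well as $(\gamma_1,\ldots,\gamma_\xi)$ itself. For linear independence of the resulting $2\xi-k$ vectors, the strategy is to pair with $\Omega_{\Th}$: since all the pairs $\{\gamma_i,\gamma_j\}$, $\{D_i,D_j\}$, $\{\gamma_i,D_j\}$ with $i\neq j$ consist of disjoint curves, Theorem~\ref{disj} gives
\[ \Omega_{\Th}(\gamma_i,\gamma_j)=\Omega_{\Th}(D_i,D_j)=\Omega_{\Th}(\gamma_i,D_j)=0 \quad (i\neq j), \]
while a direct computation in the chart of $(\gamma_1,\ldots,\gamma_\xi)$ (where $\gamma_i$ has only unit twist in the $i$-th slot and $D_i$ has length parameter $2$ in the $i$-th slot, per Remark~\ref{dual}) yields $\Omega_{\Th}(\gamma_i,D_i)=\pm 2\neq 0$. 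Pairing any hypothetical relation $\sum_{i=1}^\xi a_i\mathbf{i}(\gamma_i)+\sum_{j=k+1}^\xi b_j\mathbf{i}(D_j)=0$ with $D_m$ kills every term except $a_m\Omega_{\Th}(\gamma_m,D_m)$, forcing $a_m=0$ for each $m\in\{1,\ldots,\xi\}$; pairing the remaining relation with $\gamma_m$ for $m\geq k+1$ then forces $b_m=0$.

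The main obstacle is the chart-compatibility of these $\Omega_{\Th}$-computations: Theorem~\ref{disj} gives the formula only when the paired curves lie in a common chart with respect to the fixed pants decomposition $\PC$, whereas the cleanest computation of $\Omega_{\Th}(\gamma_i,D_i)$ uses the pants decomposition $(\gamma_i)$. One resolves this either by arranging all $2\xi$ clean-marking curves to be carried by a single standard train track relative to $\PC$ (modifying each $D_i$ by Dehn twists along the $\sigma_j$ if necessary), or by invoking the fact that $\Omega_{\Th}$ is an intrinsic $2$-form on $\MLQ(\Sigma)$, so that values computed in any convenient chart coincide with those furnished by Theorem~\ref{disj} in the $\PC$-chart.
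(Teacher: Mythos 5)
Your construction coincides with the paper's: extend ${\bf g}$ to a pants decomposition $(\g_1,\ldots,\g_\xi)$, take dual curves $D_i$ so as to form a complete clean marking, and read the required simplices off the disjointness built into Definition~\ref{mark}; part (i) and the simplex assertions in (ii) are handled just as in the paper. Where you genuinely diverge is the linear-independence step, and there your argument has a gap at the diagonal term. Theorem~\ref{disj} identifies the coordinate expression $\sum_i(q_ip'_i-q'_ip_i)$, taken in the fixed $\PC$-coordinates, with $\Omega_{\Th}$ only for curves lying in a common standard chart for $\PC$. For $i\neq j$ the relevant curves are disjoint, hence carried by a common standard track, so the cross terms do vanish; but $\g_m$ and $D_m$ intersect, and you compute $\Omega_{\Th}(\g_m,D_m)=\pm2$ in the chart adapted to the new decomposition $(\g_1,\ldots,\g_\xi)$, whereas your hypothetical relation, and the pairing you apply to it, live in the $\PC$-coordinates. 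What the argument actually needs is that the standard symplectic pairing of the $\PC$-coordinate vectors $\i(\g_m)$ and $\i(D_m)$ is nonzero, and neither of your proposed fixes delivers this: modifying the $D_i$ by Dehn twists along the $\s_j$ destroys the disjointness $D_i\cap\g_l=\varnothing$ for $l\neq i$ (the $\g_l$ are in general not invariant under such twists), and with it both the link conditions and the vanishing of the cross terms; while appealing to the intrinsic nature of $\Omega_{\Th}$ presupposes exactly the missing point, namely that the intersecting pair $\g_m, D_m$ is carried by a common $\PC$-standard track so that Theorem~\ref{disj} applies to it in the $\PC$-chart --- this is not automatic and you do not verify it.

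For comparison, the paper sidesteps the issue: it passes to the Dehn--Thurston coordinates attached to the new marking $\mu_0=(\g_1,\ldots,\g_\xi;D_1,\ldots,D_\xi)$, in which $\i(\g_i)$ has only $p_i=1$ nonzero and $\i(D_i)$ has only $q_i=2$ nonzero (Remark~\ref{dual}), so the $2\xi-k$ vectors are visibly independent, and then asserts that the change of coordinates back to the $\PC$-marking is linear, hence preserves independence; no symplectic pairing and no nonvanishing diagonal term are needed. If you wish to keep the pairing argument, the cleanest repair is to run it entirely in the $\mu_0$-coordinates, where $\g_m$ and $D_m$ manifestly sit on a common standard track and the diagonal pairing is $\pm2$ --- but at that point the vectors are already standard basis multiples and the pairing is superfluous, so the argument collapses into the paper's. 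As written, the transfer of the diagonal computation to the $\PC$-chart is the missing step.
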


\begin{proof}
    \noindent $(i)$: Following Series' proof, the disjointness of the curves $\gamma_1,\ldots, \g_k$ tells us they are supported on a common standard train track. The second part of $(ii)$ is proved, as a particular case, in the proof of $(ii)$. 
    
    \noindent $(ii)$: The idea is to complete ${\bf g}$ to a pants decomposition of $\Sigma$ and to consider the dual curves of the pants curves added. In detail let $\gamma_{k+1},\ldots, \g_{\xi}$ be such that $\{\gamma_1,\ldots, \g_{\xi}\}$ is a pants decomposition of $\Sigma$ and let $D_{i}$ be the dual curve of $\g_i$. (Note that $D_{i}$ is disjoint from any pants curve $\g_{j}$ when $j \neq i$ and intersects $\g_{i}$ twice.) Using the language of Masur and Minsky \cite{MM2}, we can say we have chosen a complete, clean marking $\mu = (\g_1,\ldots, \g_\xi;D_1,\ldots, D_\xi)$ (that is a vertex in the marking complex where $\g_1,\ldots, \g_k$ are curves in the base of $\mu$) and we define a path $\mu = \mu_{0}, \mu_{1},\ldots, \mu_{2\xi-k}$ by the requirement $\mu_{i}$ is obtained from $\mu_{i-1}$ by flipping $\g_{k+i}$ and $D_k+i$ for $i = 1, \ldots, \xi-k$. The simplices in the statement of the theorem are then the bases of the markings $\mu_i$ for $ i = 0, \ldots, 2\xi-k$.
    
    We want to show that the vectors $\i(\g_1), \ldots, \i(\g_\xi)$, $\i(D_{k+1}) \ldots, \i(D_\xi)$ are linear independent. Without loss of generality, we can assume the map $\i$ is defined with respect to the marking $\mu_0$. Indeed, if that it is not the case, the change of coordinates between the map $\i$ and a new map $\i'$ defined with respect to a new marking $\mu'$ is a linear map, which doesn't change our conclusion about the linear independence of the vectors. Now the vector $\i(\g_i) = (q_1,p_1, \ldots,q_\xi,p_\xi)$ is defined by $p_i = 1$ and $q_j = p_j = q_i = 0$ for all $j \neq i$ and the vector $\i(D_i) = (q_1,p_1, \ldots,q_\xi,p_\xi)$ is defined by $q_i = 2$ and $q_j = p_j = p_i = 0$ for all $j \neq i$. (See Remark \ref{dual} for a description of the convention on dual curves that we are using.) This proves that the vectors $\i(\g_1), \ldots, \i(\g_\xi),\i(\d_{k+1}) \ldots, \i(\d_\xi)$ are linearly independent.
\end{proof}

Now we can state precisely Theorem \ref{thmA} of the Introduction.

\begin{Theorem}[Theorem A]\label{TA}
    Suppose that $\eta = \sum_{i = 1}^{k} a_i \g_i$ is admissible (and $k \leqslant \xi$). Let $\i(\eta) = (q_1(\eta),p_1(\eta),\ldots,q_\xi(\eta),p_\xi(\eta))$. Let $L_{\eta}: [0,\infty) \to \C^\xi$ be the line $t \mapsto (w_1(t),\ldots, w_\xi(t)) $ where $$w_i(t) = -\frac{p_i}{q_i} + it \frac{q_1}{q_i}.$$ Let $(\tau_1(\th),\ldots, \tau_\xi(\th)) \in \C^\xi$ be the point corresponding to the group $G_{\eta}(\th)$ with $\pl^+(G) = \th \eta$, so that the pleating ray $\cal P_{\eta} $ is the image of the map $p_{\eta}: \th \to (\tau_1(\th),\ldots, \tau_\xi(\th))$ for a suitable range of $\th>0$. Then $\cal P_{\eta}$ approaches $L_{\eta}$ as $\th \to 0$ in the sense that if $t(\th) = \frac{4}{\th q_1}$, then $$ | \Re \tau_i(\th) - \Re w_i(t(\th)) | = O(\th) \ {\rm and}\ | \Im \tau_i(\th) - \Im w_i(t(\th)) | = O(1),\;\; i=1,\ldots,\xi.$$ 
\end{Theorem}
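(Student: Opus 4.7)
The imaginary part of the claim, namely $|\Im\tau_i(\theta) - \Im w_i(t(\theta))| = O(1)$, is an immediate restatement of Theorem \ref{Prop6_1}, since $\Im w_i(t(\theta)) = t(\theta)q_1/q_i = 4/(\theta q_i(\eta))$ agrees with $\Im\tau_i(\theta)$ up to $O(1)$. Hence the substantive task is to upgrade the $O(1)$ estimate for $\Re\tau_i$ in Theorem \ref{Prop6_1} to $O(\theta)$. The plan is to exploit that $\partial^{+}\CC(G)$ is flat away from $\gamma_1,\ldots,\gamma_k$, so every curve in the link $\mathrm{lk}(\gamma_1,\ldots,\gamma_k)\subset\CC(\Sigma)$ is a bending line of zero bending angle, and then apply Proposition \ref{prop7_1} to a carefully chosen collection of such curves.

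First, I would invoke Lemma \ref{pants}(ii) to extend the simplex $(\gamma_1,\ldots,\gamma_k)$ to a pants decomposition $\gamma_1,\ldots,\gamma_\xi$ of $\Sigma$ and to produce dual curves $D_{k+1},\ldots,D_\xi$ so that the $2\xi-k$ vectors $\mathbf{i}(\gamma_1),\ldots,\mathbf{i}(\gamma_\xi),\mathbf{i}(D_{k+1}),\ldots,\mathbf{i}(D_\xi)$ are linearly independent in $\R^{2\xi}$. Because each of these curves belongs to the link of $(\gamma_1,\ldots,\gamma_k)$, it is a bending line for $G_\eta(\theta)$, and Proposition \ref{prop7_1} yields the linear system
\begin{equation*}
    \mathbf{i}(\gamma_i)\cdot\mathbf{u} = O(\theta)\quad(1\le i\le\xi),\qquad \mathbf{i}(D_j)\cdot\mathbf{u}=O(\theta)\quad(k+1\le j\le\xi),
\end{equation*}
with $\mathbf{u}$ as defined immediately after Proposition \ref{prop7_1}. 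The orthogonal complement in $\R^{2\xi}$ of the span of these $2\xi-k$ vectors has dimension $k$. Theorem \ref{disj} identifies it: the vectors $\mathbf{i}(\gamma_1)^*,\ldots,\mathbf{i}(\gamma_k)^*$ all pair to zero against each $\mathbf{i}(\gamma_i)$ and each $\mathbf{i}(D_j)$ for $j>k$, since in every case the two underlying curves are disjoint and so $\Omega_{\Th}$ vanishes. Thus the orthogonal complement is precisely their span, and solving the system gives
\begin{equation*}
    \mathbf{u} = \sum_{j=1}^{k} c_j\,\mathbf{i}(\gamma_j)^{*} + O(\theta)
\end{equation*}
coordinatewise, for some real coefficients $c_j = c_j(\theta)$.

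It remains to pin down the $c_j$ using the information already secured for the imaginary parts. Reading off the even coordinates gives $u_{i2} = -\sum_j c_j q_i(\gamma_j) + O(\theta)$; reading the odd coordinates gives $u_{i1} = \sum_j c_j p_i(\gamma_j) + O(\theta)$. On the other hand, by Theorem \ref{Prop6_1} and Corollary \ref{Corol6_1}, the quantities $u_{i2}=\prod_{\ell\ne i}\eta_\ell$ are bounded above and below (using admissibility of $\eta$) and satisfy $u_{i2}/u_{12} = q_i(\eta)/q_1(\eta)+O(\theta)$. Since $q_i(\eta)=\sum_j a_j q_i(\gamma_j)$, the linear independence of the $\mathbf{i}(\gamma_j)$ from Lemma \ref{pants}(i) forces $c_j = \lambda a_j + O(\theta)$ for a common scalar $\lambda$ bounded in $\theta$. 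Substituting this back into $u_{i1}$ and dividing by $u_{i2}$ yields
\begin{equation*}
    \Re\tau_i = \frac{u_{i1}}{u_{i2}} = \frac{\lambda\sum_j a_j p_i(\gamma_j)+O(\theta)}{-\lambda\sum_j a_j q_i(\gamma_j)+O(\theta)} = -\frac{p_i(\eta)}{q_i(\eta)}+O(\theta)=\Re w_i(t(\theta))+O(\theta),
\end{equation*}
which is the desired bound.

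The main obstacle I anticipate is purely technical: making the propagation of $O(\theta)$ errors through the linear algebra rigorous, which reduces to establishing that the matrix built from $\mathbf{i}(\gamma_1),\ldots,\mathbf{i}(\gamma_\xi),\mathbf{i}(D_{k+1}),\ldots,\mathbf{i}(D_\xi)$ has a uniformly bounded pseudoinverse and that $u_{i2}$ is bounded away from zero along the ray. Both follow from the admissibility hypothesis $q_i(\eta)>0$ for all $i$, which ensures that the asymptotic direction of $(\Im\tau_1,\ldots,\Im\tau_\xi)$ stays in the positive octant and away from the coordinate hyperplanes. Handling these uniformity issues, together with verifying the earlier estimates (i.e.\ the analogues of Propositions 6.6, 6.11, and 6.14 of \cite{Series1} underlying Theorem \ref{Prop6_1}) in the higher complexity setting with the dual-curve convention of Remark \ref{dual}, is where the genuine work lies.
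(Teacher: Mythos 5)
Your strategy coincides with the paper's proof up to the last step: the treatment of the imaginary parts via Theorem \ref{Prop6_1}, the use of Lemma \ref{pants}(ii) to produce the family $\g_1,\ldots,\g_\xi,D_{k+1},\ldots,D_\xi$ of bending lines, Proposition \ref{prop7_1} to get the system $\i(\zeta)\cdot{\bf u}=O(\th)$, and Theorem \ref{disj} to identify the orthogonal complement with the span of $\i(\g_1)^*,\ldots,\i(\g_k)^*$, so that ${\bf u}=\sum_j c_j\,\i(\g_j)^*+O(\th)$ — all of this is exactly what is done in the paper, and the uniformity issues you flag (bounded inverse for a fixed set of vectors, $u_{i2}$ bounded below via admissibility) are genuine but routine.

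The gap is in how you pin down the coefficients $c_j$. You argue that since $u_{i2}/u_{12}=q_i(\eta)/q_1(\eta)+O(\th)$ and $q_i(\eta)=\sum_j a_j q_i(\g_j)$, ``the linear independence of the $\i(\g_j)$'' forces $c_j=\lambda a_j+O(\th)$. But the even coordinates only tell you that $\sum_j(c_j-\lambda a_j)\,q_i(\g_j)=O(\th)$ for $i=1,\ldots,\xi$; to conclude $c_j=\lambda a_j+O(\th)$ you need the $\xi\times k$ matrix $\bigl(q_i(\g_j)\bigr)$ — the projection of the vectors $\i(\g_j)$ onto the length coordinates alone — to have rank $k$ with uniformly bounded left inverse. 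Linear independence of the full vectors $\i(\g_j)\in\R^{2\xi}$ does \emph{not} imply this: its failure is precisely the definition of an \emph{exceptional} lamination given just before the proof, and such laminations exist (this is why Series had to exclude them). Since Theorem \ref{TA} is stated without any non-exceptionality hypothesis, and removing that hypothesis is explicitly one of the paper's contributions, your argument as written only proves the non-exceptional case. The paper closes this hole by using, in addition to the ratios of the $y_i$, the second relation of Corollary \ref{Corol6_1} on the real parts, packaging both into the complex quantities $Q_i(\g_j)=q_i(\g_j)+i\,p_i(\g_j)$ and $W_i=\sum_j\lambda_j Q_i(\g_j)$ (so $\Re W_i=-u_{i2}$, $\Im W_i=u_{i1}$): the matrix $\bigl(Q_i(\g_j)\bigr)$ does have real-linearly independent columns, because a real dependence among its columns would be a dependence among the full vectors $\i(\g_j)$, and this yields $\l_i/\l_j=a_i/a_j+O(\th)$, hence ${\bf u}\approx\alpha\,\i(\eta)^*$ and $\Re\tau_i=-p_i(\eta)/q_i(\eta)+O(\th)$, in all cases. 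To repair your proof you would need to incorporate the twist-coordinate information in this (or an equivalent) way rather than relying on the $q$-coordinates alone.
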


\begin{Remark}
    Note that here, in contrast to the approach followed by Series in \cite{Series1}, we do not need to exclude from our statements the case of `exceptional curves' and to be dealt with separately. For completeness, we include a definition of exceptional curves, but the interested reader should see \cite{Series1} for a deeper discussion.
\end{Remark}

\begin{Definition}
A geodesic lamination $\eta = \sum_{i = 1}^{k}a_i \delta_{\g_i}$ is \textit{exceptional} if the matrix $\left(q_i(\g_j)\right)_{\substack{i = 1, \ldots, \xi\\ j = 1,\ldots,k}}$ has no maximal rank.
\end{Definition}

We are now ready to prove the theorem.

\begin{proof}[Proof of Theorem \ref{TA}]
    We will use the previous notation, that is we will write $\tau_i(\th) = \tau_i = x_i+i y_i,\;\; \rho = \| (y_{1}, \ldots, y_{\xi})\|$, and $\eta_i= \frac{y_i}{\rho}$, where the dependence on $\th$ is clear. By Theorem~\ref{Prop6_1}, we have $y_i - \frac{4}{\th q_i} = O(1)$. On the other hand, with $t = t(\th)$ as in the statement of the theorem, we find $ \Im w_i(t) = t \frac{q_1}{q_i} = \frac{4}{\th q_i}$. Thus for $i=1,\ldots,\xi$ we have
    $$|\Im \tau_i(\th) - \Im w_i(t(\th))| = O(1),$$ as $\th \to 0$, as we wanted to prove.
    
    Now, let's deal with the coordinates $x_i = \Re \tau_i(\th)$. Given $\g_1,\ldots,\g_k$, let $\g_{k+1},\ldots,\g_{\xi}$, $D_{k+1},\ldots, D_{\xi}$ the curves defined by Lemma \ref{pants}. If $(\tau_1,\ldots,\tau_\xi) \in \P_{\eta}$, then the curves $\g_{1},\ldots, \g_k, \g_{k+1},\ldots,\g_{\xi}$, $D_{k+1},\ldots, D_{\xi}$ are all bending lines of $G(\tau_1,\ldots,\tau_\xi)$. It follows, that $$\Im \Tr(\g_i) = \Im \Tr(D_j) = 0$$ for $i = 1,\ldots, \xi$ and $j = k+1, \ldots, \xi$. So, by Proposition \ref{prop7_1}, it follows that $$E_{\zeta} (\tau_{1}, \ldots, \tau_{\xi}) = O(\th) \;\;\mathrm{as} \;\;\th \to 0$$ for $\zeta \in \{\g_1,\ldots,\g_{\xi}, D_{k+1},\ldots, D_{\xi}\}$. Defining $\eta = \eta_1\cdots\eta_{\xi}$ and regarding these as equations in $\R^{2\xi}$ for a parameter ${\bf u} \in \R^{2\xi}$, where
    \begin{eqnarray*}
    	{\bf u}\; = \;(u_{1 1}, u_{1 2}, \ldots, u_{\xi 1}, u_{\xi 2})\;= \;\eta (\frac{x_1}{\eta_1},\; \frac{1}{\eta_1},\;\ldots,\; \frac{x_{\xi}}{\eta_{\xi}},\; \frac{1}{\eta_{\xi}}), 	
    \end{eqnarray*}
    we have, for $\zeta \in \{\g_1,\ldots,\g_{\xi}, D_{k+1},\ldots, D_{\xi} \}$, 
    \begin{equation}\label{eqn:almostzeta} 
        \i(\zeta) \cdot {\bf u}= O( {\th}).
	\end{equation}
By Theorem \ref{disj}, we have $\Omega_{\rm Th} (\g_i, \zeta) = 0$ for $i = 1,\ldots,k$ for any $\zeta \in \mathrm{lk}_{\CC}(\g_i) \cup \{\g_1, \ldots, \g_k\}$. Hence $\i(\zeta) \cdot \i(\g_i)^{*} = 0$ for $i = 1,\ldots,k$ and for all $\zeta \in \{\g_{1},\ldots,\g_{\xi}, D_{k+1},\ldots, D_{\xi} \}$. Since $\i(\g_1), \ldots, \i(\g_\xi)$, $\i(D_{k+1}) \ldots, \i(D_\xi)$ are independent, it follows that we can write 
	\begin{equation}\label{eqn:almost} 
	    {\bf u} (\th) = \lambda_1 (\th) \i(\gamma_1)^* + \ldots + \lambda_k (\th) \i(\gamma_k)^* + \eta (\th) {\bf v}(\th) 
	\end{equation}
where ${\bf v} = {\bf v}(\th) $ is in the linear span of $\i(\g_1), \ldots, \i(\g_\xi)$, $\i(D_{k+1}) \ldots, \i(D_\xi)$ and $||{\bf v}|| = 1$.

Using \eqref{eqn:almostzeta} we find that ${\bf u} \cdot {\bf v}= O( \th)$ (where the constants depend on $\i(\g_1), \ldots, \i(\g_\xi)$, $\i(D_{k+1}) \ldots, \i(D_\xi)$). Then ${\bf v} \cdot \i(\gamma_i)^* = 0$ for $i = 1,\ldots,k$ gives $\eta (\th) = O(\th)$. Equating the two sides of~\eqref{eqn:almost} gives 
\begin{equation}\label{eqn:almost*} 
	\begin{split}
		& u_{i 1} =  \frac{\eta x_i }{\eta_i} = \lambda_1 p_i(\g_1) + \cdots + \lambda_k p_i(\g_k) +O( \th),\\
		& u_{i 2} = \frac{\eta}{\eta_i} = -\lambda_1 q_i(\g_1) - \cdots -\lambda_k q_i(\g_k) +O( \th). 
	\end{split}
\end{equation}
So we proved ${\bf u}$ belongs to the $k$--dimensional subspace $\Pi$ generated by $\i(\gamma_1)^*, \ldots, \i(\gamma_k)^*$. Now we want to prove ${\bf u}$ is approximately parallel to the vector $\i(\eta)^*$, that is $(\l_1, \ldots,\l_k)$ is proportional to $(a_1,\ldots,a_k)$. To do this, and to avoid the restriction to non exceptional curves, we modify slightly Series' approach. 

By Corollary \ref{Corol6_1}, we have 
\begin{equation}\label{eqn:cor} 
	\begin{split}
		& \left\|\frac{y_i}{y_j}-\frac{a_1 q_j(\g_1) + \cdots + a_k q_j(\g_k)}{a_1 q_i(\g_1) + \cdots + a_k q_i(\g_k)}\right\| = O(\th)\\
		& \left\|\frac{x_j y_i}{y_j x_i}-\frac{a_1 p_j(\g_1) + \cdots + a_k p_j(\g_k)}{a_1 p_i(\g_1) + \cdots + a_k p_i(\g_k)}\right\| = O(\th).
	\end{split}    
\end{equation}

We can now put this information together as:
$$\left\|(\frac{y_i}{y_j}+i\frac{x_j y_i}{y_j x_i})-\frac{a_1 Q_j(\g_1)+ \cdots + a_k Q_j(\g_k)}{a_1 Q_i(\g_1)+ \cdots + a_k Q_i(\g_k)}\right\| = O(\th),$$
where we defined $Q_i(\g) = q_i(\g)+i p_i(\g)$ in order to keep the notation more neat.
Defining new variables $W_i = \lambda_1 (q_i(\g_1)+i p_i(\g_1)) + \cdots + \lambda_k (q_i(\g_k)+ i p_i(\g_k))$, we have, by \eqref{eqn:almost*}, $\Re W_i = -u_{i 2}$ and $\Im W_i = u_{i 1}$. So we have $$\left\|\frac{\Re W_j}{\Re W_i} -\frac{y_i}{y_j}\right\| = O(\th)\;\;\;\;\mathrm{and} \;\;\;\;\left\|\frac{\Im W_j}{\Im W_i} - \frac{x_j y_i}{y_j x_i}\right\| = O(\th).$$ Hence we get
\begin{equation}\label{eqn:cor2} 
	\left\|(\frac{\Re W_j}{\Re W_i}+i\frac{\Im W_j}{\Im W_i})-(\frac{y_i}{y_j}+i\frac{x_j y_i}{y_j x_i})\right\| = O(\th).
\end{equation}

Now using equations \ref{eqn:cor}, \ref{eqn:cor2} and the definition of the variables $W_i$, we get $$\left\|\frac{\l_1 Q_j(\g_1) + \cdots + \l_k Q_j(\g_k)}{\l_1 Q_i(\g_1) + \cdots + \l_k Q_i(\g_k)}-\frac{a_1 Q_j(\g_1) + \cdots + a_k Q_j(\g_k)}{a_1 Q_i(\g_1) + \cdots + a_k Q_i(\g_k)}\right\| = O(\th). $$
Since this is true for all $i,j = 1,\ldots,\xi,\;\;i\neq j$, and since the matrix $(Q_r(\g_s))_{\substack{ r = 1, \ldots, \xi\\
s = 1,\ldots,k}}$ has maximal rank (because, since the curves $\g_1, \ldots,\g_k$ are distinct , the lines of that matrix are linearly independent) and since the norm of the vector $(\l_1-a_1, \ldots, \l_k-a_k)$ is one, then we can conclude the following: $$\left\|\frac{\l_i}{\l_j}-\frac{a_i}{a_j}\right\| = O(\th), \;\;\forall i,j = 1,\ldots,k,\;\;i\neq j,$$ that is ${\bf u} = \alpha \i(\eta)^*$ for some $\alpha > 0$, as we wanted to prove. 
\end{proof}

\begin{Remark}
    We were able to get rid of the hypothesis of non-exceptionality, since we looked simultaneously at both the length and the twist of the Dehn--Thurston coordinates for the distinct curves $\g_1, \ldots,\g_k$.
\end{Remark}

\bibliographystyle{amsplain}

\end{document}